\setlist[itemize]{noitemsep, topsep=1pt, leftmargin=20pt}
\newcommand\bcdot{\ensuremath{
  \mathchoice
   {\mskip\thinmuskip\lower0.2ex\hbox{\scalebox{1.6}{$\cdot$}}\mskip\thinmuskip}}
   {\mskip\thinmuskip\lower0.2ex\hbox{\scalebox{1.6}{$\cdot$}}\mskip\thinmuskip}
   {\lower0.3ex\hbox{\scalebox{1.2}{$\cdot$}}}
   {\lower0.3ex\hbox{\scalebox{1.2}{$\cdot$}}}
}
\theoremstyle{plain}
\newtheorem{theo}{Theorem}[section]
\newtheorem{prop}[theo]{Proposition}
\theoremstyle{definition}
\newtheorem{rem}[theo]{Remark}
\newtheorem{definition}[theo]{Definition}
\theoremstyle{plain}
\newtheorem{lemma}[theo]{Lemma}
\newtheorem{theorem}[theo]{Theorem}
\newtheorem{corollary}[theo]{Corollary}
\newtheorem{proposition}[theo]{Proposition}
\theoremstyle{definition}
\theoremstyle{plain}
\newtheorem{thmint}{Theorem}
\newtheorem{corint}[thmint]{Corollary}
\renewcommand{\=}{\coloneqq}
\newcommand{\td}{\mathtt{d}}
\renewcommand{\a}{\alpha}
\renewcommand{\b}{\beta}
\renewcommand{\d}{\delta}
\newcommand{\e}{\varepsilon}
\newcommand{\g}{\gamma}
\renewcommand{\l}{\lambda}
\newcommand{\D}{\Delta}
\renewcommand{\S}{\Sigma}
\DeclareSymbolFontAlphabet{\mathbb}{AMSb}
\DeclareSymbolFontAlphabet{\mathbbl}{bbold}
\newcommand{\bR}{\mathbb{R}}
\newcommand{\bN}{\mathbb{N}}
\newcommand{\fG}{\mathsf{G}}
\newcommand{\fH}{\mathsf{H}}
\newcommand{\fK}{\mathsf{K}}
\newcommand{\fN}{\mathsf{N}}
\newcommand{\fS}{\mathsf{S}}
\newcommand{\fT}{\mathsf{T}}
\newcommand{\fSO}{\mathsf{SO}}
\newcommand{\fSU}{\mathsf{SU}}
\renewcommand{\gg}{\mathfrak{g}}
\newcommand{\gh}{\mathfrak{h}}
\newcommand{\gk}{\mathfrak{k}}
\newcommand{\gm}{\mathfrak{m}}
\newcommand{\gn}{\mathfrak{n}}
\newcommand{\gt}{\mathfrak{t}}
\newcommand{\so}{\mathfrak{so}}
\newcommand{\su}{\mathfrak{su}}
\newcommand\Sym{\mathrm{Sym}}
\newcommand{\cC}{\mathcal{C}}
\newcommand{\cR}{\mathcal{R}}
\newcommand{\eB}{\EuScript{B}}
\newcommand{\eM}{\EuScript{M}}
\newcommand{\eW}{\EuScript{W}}
\DeclareMathOperator\Tr{Tr}
\DeclareMathOperator\inj{inj}
\DeclareMathOperator\Lie{Lie}
\DeclareMathOperator\End{End}
\DeclareMathOperator\Hess{Hess}
\DeclareMathOperator\scal{scal} 
\DeclareMathOperator\Ad{Ad}
\DeclareMathOperator\ad{ad}
\DeclareMathOperator\Id{Id}
\DeclareMathOperator\diff{d\!}
\newcommand{\wt}{\widetilde}
\newcommand{\Ric}{\operatorname{Ric}}
\newcommand{\Rm}{\operatorname{Rm}}
\newcommand{\zero}{\operatorname{o}}
\title[]{Collapsed ancient solutions of the Ricci flow \\ on compact homogeneous spaces}
\author{Francesco Pediconi}
\address[Francesco Pediconi]{Dipartimento di Matematica e Informatica ``Ulisse Dini'' \\ Universit\`a degli Studi di Firenze, viale Morgagni 67/a, 50134 Firenze, Italy}
\curraddr{Department of Mathematics, Aarhus University, Ny Munkegade 118, 8000 Aarhus C, Denmark}
\email{francesco.pediconi@unifi.it}
\email{francesco.pediconi@math.au.dk}
\author{Sammy Sbiti}
\address[Sammy Sbiti]{Department of Mathematics, University of Pennsylvania, 209 South 33rd St, Philadelphia, PA, 19104-6395, USA}
\email{sbiti@sas.upenn.edu }
\subjclass[2020]{53C30, 53C21, 53E20, 57S15}
\keywords{Ricci flow, collapsed ancient solutions, torus bundles, Einstein metrics}
\thanks{The first-named author is member of GNSAGA of INdAM and has been supported by the project PRIN 2017 ``Real and Complex Manifolds: Topology, Geometry and holomorphic dynamics'' (code 2017JZ2SW5).}
\begin{document}

\begin{abstract}
We prove a general existence theorem for collapsed ancient solutions to the Ricci flow on compact homogeneous spaces and we show that they converge in the Gromov-Hausdorff topology, under a suitable rescaling, to an Einstein metric on the base of a torus fibration. This construction generalizes all previous known examples in the literature.
\end{abstract}

\maketitle

\section{Introduction}

Given a {connected} smooth manifold $M$, a solution to Hamilton's {\it Ricci flow} is a smooth path of Riemannian metrics $g(t)$ satisfying the geometric non-linear PDE
$$
\tfrac{\partial}{\partial t} g(t) = -2\Ric_{\mathsmaller M}(g(t)) \,\, .
$$
Up to a time-dependent family of diffeomorphisms, it is equivalent to a parabolic PDE and so, similar to the heat equation, the Ricci flow has regularizing properties for Riemannian metrics making it a fundamental tool in the study of classification-type problems in geometry and topology.
Solutions that are defined on a time interval $(-\infty, 0]$ have a special significance and are called {\it ancient solutions}.
Since the Ricci flow is equivalent to a parabolic PDE, the backwards equation is generally ill-posed and hence ancient solutions are rare. \smallskip

Now let $\fG$ be a compact Lie group acting transitively and almost effectively on $M$, so that $M$ is compact as well. Then, by diffeomorphism invariance, the Ricci flow induces a dynamical system on the space of $\fG$-invariant Riemannian metrics $\eM^{\fG}_{\mathsmaller M}$ on $M$. We recall that an ancient solution to the $\fG$-homogeneous Ricci flow $g(t)$ is said to be {\it non-collapsed} if the curvature-normalized metrics $|\Rm_{\mathsmaller M}(g(t))|_{g(t)}g(t)$ have a uniform lower injectivity radius bound, otherwise it is said to be {\it collapsed}. In \cite{BLS}, the authors proved that non-collapsed ancient solutions on a compact homogeneous space must emanate from a homogeneous Einstein metric on the same space. Moreover, the {\it normalized Ricci flow}
$$
\tfrac{\partial}{\partial t} \tilde{g}(t) = -2\Ric_{\mathsmaller M}^0(\tilde{g}(t)) \,\, 
$$
on the subspace of $\fG$-invariant, unit volume, Riemannian metrics $\eM^{\fG}_{{\mathsmaller M},1} \subset \eM^{\fG}_{\mathsmaller M}$, which is equivalent to the Ricci flow up to rescaling and time reparametrization, is the gradient flow of the scalar curvature functional, whose critical points are $\fG$-invariant, unit volume, Einstein metrics on $M$. Therefore, non-collapsed ancient solutions to the $\fG$-homogeneous Ricci flow always exist whenever $M$ admits a $\fG$-invariant, $\fG$-unstable Einstein metric (see \cite[Lemma 5.4]{BLS}). \smallskip

On the other hand, it is known that a compact homogeneous space admits collapsed ancient solutions to the homogeneous Ricci flow only if it is the total space of a homogeneous torus bundle (see {\it e.g.\ }Proposition \ref{prop:coll-0PS}). Examples of such solutions have been found, {\it e.g.\ }in \cite{S,Bu,BLS,BKN,LW}, on a case by case basis, and up to now there have been few general existence theorems. \smallskip

The main result of our paper is the following.

\begin{thmint} \label{main-A}
Let $\fH \subsetneq \fT\fH\subsetneq\fG$ be compact, connected Lie groups, where $\fT$ is a maximal torus of a compact complement of $\fH$ in the normalizer $\fN_\fG(\fH)$ with $d = \dim(\fT) \geq 1$. For any $\fG$-invariant, unit volume Einstein metric $\bar{g}$ on $N = \fG/\fT\fH$ of coindex $q$, there exists a $\big(\frac{d(d+1)}2+q-1\big)$-parameter family of collapsed ancient solutions to the homogeneous Ricci flow on $M = \fG/\fH$ that, under a suitable rescaling, shrink the fibers of $\fT \to M \to N$ and converge to $(N,\bar{g})$ in the Gromov-Hausdorff topology as $t \to -\infty$.
\end{thmint}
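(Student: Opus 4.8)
The plan is to reduce the $\fG$-homogeneous Ricci flow to a finite-dimensional dynamical system adapted to the fibration and to produce the desired solutions as trajectories in the unstable manifold of an ideal fixed point sitting on the collapsed boundary of the space of metrics. First I would fix a reductive structure for the tower $\fH \subset \fT\fH \subset \fG$: choosing an $\Ad(\fT\fH)$-invariant complement $\gn$ of $\gt \oplus \gh$ in $\gg$ and using that $\fT$ centralizes the connected group $\fH$, so that $\fH$ acts trivially on the fiber directions $\gt$, the isotropy module $\gp = \gt \oplus \gn$ of $M=\fG/\fH$ splits. Consequently a $\fG$-invariant metric on $M$ is prescribed by an inner product $h \in \Sym^2_+(\gt^*)$ on the torus fiber, carrying $\tfrac{d(d+1)}2$ parameters, together with a $\fG$-invariant metric on the base $N=\fG/\fT\fH$, identified with an invariant inner product on $\gn$. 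I would work with the flow-adapted block ansatz $g = h \oplus g_\gn$ and record the $\fG$-invariant Ricci curvature of such metrics through the homogeneous submersion formulas for the totally geodesic torus bundle $\fT \to M \to N$; the only coupling between fiber and base enters through the bundle-curvature ($A$-tensor) terms built from the $\gt$-components of $[\gn,\gn]$, and measured in $h$ these scale linearly with the fiber size.

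Next I would pass to the normalized picture by rescaling $g(t)$ so that the base metric is kept of unit size while the fiber is allowed to shrink, equivalently blowing up the collapsed stratum inside $\eM^\fG_{M}$. On the stratum where the fiber volume vanishes the $A$-tensor corrections drop out, the base Ricci curvature reduces to $\Ric_N$, and the induced dynamics on $N$ is exactly the normalized homogeneous Ricci flow, i.e.\ the gradient flow of $\scal_N$, whose prescribed critical point is the Einstein metric $\bar g$. Hence the pair $p_\infty = (\,\text{fiber}=0,\ \text{base}=\bar g\,)$ is an ideal fixed point of the rescaled system, and the collapsed ancient solutions I am after are precisely the trajectories converging to $p_\infty$ as $t \to -\infty$, that is, those lying in its unstable manifold for the forward rescaled flow.

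I would then linearize the rescaled flow at $p_\infty$ and count unstable directions. In the collapsed limit the fiber block decouples, and one checks that, backward in time, the torus collapses along every direction, so that all $\tfrac{d(d+1)}2$ coordinates of $\Sym^2_+(\gt^*)$ are unstable for the forward flow; the base block is controlled by the second variation of $\scal_N$ at $\bar g$, whose number of positive directions is by definition the coindex $q$. The unstable manifold of $p_\infty$ therefore has dimension $\tfrac{d(d+1)}2 + q$, and the unstable manifold theorem furnishes a family of solutions collapsing onto $\bar g$; quotienting by the one-dimensional time-translation freedom yields the asserted $\big(\tfrac{d(d+1)}2 + q - 1\big)$-parameter family. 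Reading off the asymptotics from this construction shows directly that the fiber metric decays while the rescaled base metric tends to $\bar g$, so that $(M,g(t))$ shrinks the $\fT$-fibers and converges to $(N,\bar g)$ in the Gromov--Hausdorff topology as $t\to-\infty$.

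The main obstacle will be the linearization at $p_\infty$. Since $p_\infty$ lies on the boundary where the fiber degenerates, the rescaled system is singular there and the decoupling of fiber and base holds only to leading order. I expect the genuinely delicate points to be: (i) choosing variables in which the rescaled flow extends smoothly across the collapsed stratum and $p_\infty$ becomes hyperbolic, or at least normally hyperbolic, in the fiber directions, so that the unstable manifold theorem applies and the dimension is exactly $\tfrac{d(d+1)}2 + q$ with no spurious center directions beyond time-translation; and (ii) controlling the subleading $A$-tensor terms to ensure both that turning on a small fiber does not alter the count of positive directions coming from the coindex of $\bar g$, and that the resulting solutions are genuinely collapsed in the curvature-normalized sense, rather than merely Gromov--Hausdorff convergent.
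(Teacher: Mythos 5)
Your overall strategy coincides with the paper's: restrict to submersion-type (block diagonal) metrics, rescale so that the collapsed configuration $0\oplus\bar{g}$ becomes a genuine fixed point, linearize there, count $\tfrac{d(d+1)}{2}$ unstable fiber directions plus $q$ unstable base directions, apply the invariant manifold theorem, and subtract one for time translation. However, the proposal leaves open precisely the steps that carry the technical weight, and one of them is not even flagged. First, you never justify that the block ansatz $g=h\oplus g_{\gn}$ is preserved by the Ricci flow. For a Riemannian submersion with totally geodesic fibers the mixed Ricci components $\Ric(T,X)$, $T\in\gt$, $X\in\gn$, do \emph{not} vanish in general (they involve the divergence of the $A$-tensor), so a priori the flow exits your finite-dimensional ansatz and the trajectories you construct are not Ricci flow solutions. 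You invoke the hypothesis that $\fT$ centralizes $\fH$ only to split the isotropy module; the role of \emph{maximality} of $\fT$ is exactly to close this gap: it forces $\gn$ to contain no trivial $\Ad(\fT\fH)$-submodule (Lemma \ref{lemma:almtriv}), whence by Schur's Lemma the Ricci tensor of a submersion metric is again block diagonal with $\Ad(\fT\fH)$-invariant horizontal part, as in \eqref{splittingRic}. Second, your point (i) --- extending the rescaled flow smoothly across the stratum $h=0$ so that the invariant manifold theorem applies --- is acknowledged but not solved. The paper resolves it in Proposition \ref{smoothextension}: in the explicit formula \eqref{Ssub} for the tensor $S_{\mathsmaller M}(P)$ of a submersion metric only $P_{\gn}^{-1}$ and $P_{\gt}$ (never $P_{\gt}^{-1}$) appear, so $\Ric_{\mathsmaller M}$ extends analytically to degenerate fiber metrics, and the linearization at $0\oplus\bar{g}$ is computed exactly in \eqref{linearizationR}: it is lower block triangular with fiber block $-\lambda\,\Id$ ($\lambda>0$ the Einstein constant) and base block $\diff\Ric^0_{\mathsmaller N}|_{\bar{g}}$. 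In particular your worry about spurious center directions is harmless --- possible nullity of $\diff\Ric^0_{\mathsmaller N}|_{\bar{g}}$ does not obstruct the existence of an invariant manifold tangent to the unstable subspace --- but without the analytic extension there is no fixed point at which to linearize at all.

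Two further steps are missing entirely. The invariant manifold produced by the theorem lives in the extended space where $h$ is merely symmetric, and the fixed point sits on the boundary $h=0$; to obtain actual metrics one must show that a full-dimensional set of its trajectories has positive definite fiber part. The paper does this by proving that the $\gt$-components of the unstable eigenvectors are linearly independent, hence span $\Sym(\gt,Q_{\gt})^{\Ad(\fH)}$, so the invariant manifold meets the open cone of positive definite fiber metrics in a set of full dimension $\tfrac{d(d+1)}{2}+q$. Finally, ancientness of the rescaled solutions does not automatically give ancientness of the corresponding Ricci flow solutions: undoing the rescaling and time reparametrization could in principle compress infinite rescaled time into a finite backward Ricci flow interval. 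The paper rules this out by observing that $\scal_{\mathsmaller M}$ converges to $\lambda\dim(N)>0$ backward in time along the projected flow, so the corresponding Ricci flow solution has positive scalar curvature, and then invoking \cite[Thm.\ 1.1]{La}; collapsedness in the curvature-normalized sense then follows since the curvature stays bounded while the injectivity radius tends to zero. As it stands, your proposal is a correct outline of the paper's route, with the four load-bearing arguments --- invariance of the submersion class, analytic extension across the collapse, positivity, and ancientness of the unreparametrized flow --- left unproved.
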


Here, the coindex of the Einstein metric $\bar{g}$ is defined as its coindex as a critical point of the scalar curvature functional on the space $\eM^{\fG}_{{\mathsmaller N},1}$ of unit volume $\fG$-invariant metrics on $N$ (see Section \ref{subsec:nondeg}). We mention that all the ancient solutions obtained by means of Theorem \ref{main-A} are of {\it submersion type} with respect to the homogeneous torus fibration $\fT \to M \to N$, and we expect this property to be true in general (see {\it e.g.\ }\cite{S}). Here, maximality of $\fT$ and Schur's Lemma guarantee that submersion metrics are preserved by the Ricci flow, which is crucial to the proof. Furthermore we stress that, along the solutions obtained by our theorem, the metric on the fibers $\fT$ and the base $N$ will in general change. Notice also that Theorem \ref{main-A} holds true even when $\bar{g}$ has coindex $q=0$, {\it i.e.\ }it is a local maximum of scalar curvature on $\eM^{\fG}_{{\mathsmaller N},1}$ (compare with \cite[Lemma 5.4]{BLS}). We mention that Theorem \ref{main-A} holds without the connectedness assumption on $\fH$ and $\fG$, but in such case the dimension of the space of ancient solutions on $M$ depends on the adjoint representation of $\fH$ on the Lie algebra $\gt \= \Lie(\fT)$ (see Section \ref{subsect:existence}). Lastly, we do not know, expect in some special cases, whether the solutions found by means of Theorem \ref{main-A} are isometric or not. \smallskip

We illustrate our result with the following series of examples:

\begin{corint} \label{main-B} \hfill \par
\begin{itemize}
\item[a)] On $\fSU(3)$, $\fSU(3)/\fS^1$, $\fSU(4)$, $\fSU(4)/\fS^1$, $\fSU(4)/\fT^2$, $\fG_2$ and $\fG_2/\fS^1$ there exists a 3, respectively, 1, 7, 4, 2, 3 and 1-parameter family of ancient solutions to the Ricci flow collapsing, under a suitable rescaling, to a K\"{a}hler-Einstein metric on a full flag manifold.
\item[b)] On $\fSU(n)/\fT^{n-1-k}$, with $n \geq 3$ and $1\leq k\leq n-1$, there exists a $\big(\frac{k(k+1)}2+n-2\big)$-parameter family of ancient solutions to the Ricci flow collapsing to the normal Einstein metric on $\fSU(n)/\fT^{n-1}$.
\item[c)] On $\fSO(4)$ and $\fSO(4)/\fS^1$ there exists a 3, respectively, 1-parameter family of ancient solutions to the Ricci flow collapsing to the normal Einstein metric on $\fSO(4)/\fT^2$. 
\end{itemize}
\end{corint}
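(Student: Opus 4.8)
The plan is to obtain each item of the Corollary as a direct application of Theorem \ref{main-A}. For every space $M = \fG/\fH$ in the list, I would first exhibit the chain $\fH \subsetneq \fT\fH \subsetneq \fG$, verify that $\fT$ is a maximal torus of a compact complement of $\fH$ in $\fN_\fG(\fH)$ with $d = \dim(\fT) \geq 1$, then identify the fibered base $N = \fG/\fT\fH$ together with the Einstein metric $\bar g$, and finally compute its coindex $q$. Once these data are assembled, the dimension of the family is read off from $\tfrac{d(d+1)}{2} + q - 1$. Since $d$ is immediate in each case, the entire content is the determination of $q$; and because $q$ depends only on the pair $(N,\bar g)$, it has to be computed just once per base and then reused for all the spaces fibering over it.

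For the setup, in part a) the relevant bases are the full flag manifolds $\fSU(3)/\fT^2$, $\fSU(4)/\fT^3$ and $\fG_2/\fT^2$, each carrying an invariant K\"ahler-Einstein metric $\bar g$; the various total spaces are obtained by dividing by a subtorus $\fH$ of the maximal torus and letting $\fT$ be a complementary subtorus, so that $\fT\fH$ is the maximal torus and $d$ equals the corank of $\fH$. Here I would use that, for $\fH$ in general position, the identity component of $\fN_\fG(\fH)$ is the centralizer, which coincides with the maximal torus and makes the hypotheses of Theorem \ref{main-A} transparent. In parts b) and c) the bases are $\fSU(n)/\fT^{n-1}$ and $\fSO(4)/\fT^2$ endowed with their normal (Killing) Einstein metric, and the same subtorus bookkeeping gives $d = k$ for $\fSU(n)/\fT^{n-1-k}$ and $d \in \{1,2\}$ for $\fSO(4)$ and $\fSO(4)/\fS^1$.

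The core of the argument, and the step I expect to be the hardest, is the coindex computation. On a full flag $\fG/\fT$ the isotropy representation splits as $\bigoplus_{\a} \gm_\a$ into pairwise inequivalent $2$-dimensional root summands, so every invariant metric is diagonal, $g = \sum_\a x_\a\, g|_{\gm_\a}$, and the scalar curvature is the explicit rational function of the scales $(x_\a)$ determined by the dimensions $d_\a = 2$ and the structure constants $[\a\b\g]$. One then linearizes at $\bar g$: restricting the Hessian of $\scal$ to the tangent space of the unit-volume slice $\eM^{\fG}_{N,1}$ and counting its positive eigenvalues yields $q$. For the K\"ahler-Einstein points the K\"ahler condition pins down the scales $x_\a$ (proportional to the heights of the roots) and reduces the eigenvalue count to a finite linear-algebra problem, giving $q = 1, 2, 1$ for $\fSU(3)/\fT^2$, $\fSU(4)/\fT^3$ and $\fG_2/\fT^2$ respectively. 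For the normal metric all $x_\a$ are equal, and here the expected difficulty is to organize the computation uniformly in $n$: one must diagonalize the Hessian on $\fSU(n)/\fT^{n-1}$ simultaneously for all $n$ and show that exactly $n-1$ of its eigenvalues are positive, obtaining $q = n-1$; for $\fSO(4)/\fT^2 \cong \bS^2 \times \bS^2$ the same scheme collapses to a single scalar computation giving $q = 1$.

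It then remains to substitute into the formula of Theorem \ref{main-A}. For instance $\fSU(4)$ gives $d = 3$, $q = 2$ and hence $\tfrac{3\cdot 4}{2} + 2 - 1 = 7$; $\fSU(4)/\fS^1$ gives $d = 2$, $q = 2$ and $\tfrac{2\cdot3}{2}+2-1 = 4$; $\fSU(n)/\fT^{n-1-k}$ gives $d = k$, $q = n-1$ and $\tfrac{k(k+1)}{2} + (n-1) - 1 = \tfrac{k(k+1)}{2} + n - 2$; and $\fSO(4)$ gives $d = 2$, $q = 1$ and $\tfrac{2\cdot3}{2}+1-1 = 3$. Running through the remaining entries in the same way reproduces all the stated parameter counts, which proves the Corollary.
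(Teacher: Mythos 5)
Your proposal follows essentially the same route as the paper: for each total space, exhibit the torus fibration over a full flag manifold (resp.\ over $\fSO(4)/\fT^2$), compute the coindex $q$ of the base Einstein metric as the number of positive eigenvalues of $\Hess\big(\wt{\scal}_{\mathsmaller N}\big)$ restricted to unit-volume directions, and read off the count $\tfrac{d(d+1)}2+q-1$ from Theorem \ref{main-A}. All of your values of $d$ and $q$, and hence all of the resulting parameter counts, agree with the paper's: the paper computes the Hessian spectra explicitly at the K\"ahler--Einstein metrics (scales proportional to $(1,1,2)$ on $\fSU(3)/\fT^2$, $(3,2,1,1,2,1)$ on $\fSU(4)/\fT^3$, $(1,3,4,5,6,9)$ on $\fG_2/\fT^2$) and at the normal metric on $\fSO(4)/\fT^2$, obtaining $q=1,2,1,1$ exactly as you state.

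The one substantive divergence, and the weak point of your write-up, is part b). The paper does \emph{not} diagonalize the Hessian at the normal metric on $\fSU(n)/\fT^{n-1}$; it simply cites Lauret \cite{Lau} for the fact that this metric is Einstein with coindex $q=n-1$. Your plan to establish this by a direct diagonalization, uniform in $n$, of the Hessian on the $\tfrac{n(n-1)}2$-dimensional space of invariant metrics is viable in principle, but it is a genuinely nontrivial computation that you assert rather than perform; as written, this step is a gap, and the economical fix is to invoke \cite{Lau} as the paper does. A minor further caveat: your remark that for $\fH$ ``in general position'' the identity component of $\fN_\fG(\fH)$ coincides with the maximal torus is not needed and fails for special circles, e.g.\ $\fS^1_{1,1}\subset\fSU(3)$, whose centralizer is $\U(2)$; in those cases the hypotheses of Theorem \ref{main-A} still hold with $\fT\fH$ equal to the maximal torus, so the conclusion (and the count $d=\operatorname{corank}\fH$) is unaffected, but the justification should go through compact complements in the normalizer rather than through centralizers.
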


Notice that the the circles $\fS^1\subset\fSO(4)$ in Corollary \ref{main-B} can be chosen with arbitrary slope and that the manifolds $\fSO(4)/\fS^1$ are diffeomorphic to $S^3\times S^2$ (see {\it e.g.\ }\cite{WZ}). In particular we obtain infinitely many families of ancient solutions on $S^3\times S^2$ that are homogeneous under inequivalent group actions.
Moreover, let us also observe that in \cite{Boe1} it was shown that under the assumptions of Theorem \ref{main-A} most homogeneous spaces $\fG/\fT\fH$ admit homogeneous Einstein metrics with large coindex plus nullity (see also \cite{WaZ,BWZ,BK}), to which Theorem \ref{main-A} can be applied. \smallskip

We remark that Theorem \ref{main-A} allows us to reconstruct all known examples of collapsed homogeneous ancient solutions to the Ricci flow, such as those in \cite{S,Bu,BLS,BKN,LW}.
In \cite{BKN} and \cite{LW}, the authors construct ancient solutions which consist of submersion metrics $F\to M\to B$ where one assumes either that $F,M,B$ admit Einstein metrics, $F$ is a torus, or $B$ is a product of K\"{a}hler-Einstein metrics. In these constructions the metric on the base remains fixed, or in the latter case stays within the set of products of K\"{a}hler-Einstein metrics. On the contrary, along the ancient solutions provided by Theorem \ref{main-A}, the induced metric on the base $N$ varies and does not necessarily remain Einstein, as opposed to the solutions constructed in \cite{BKN} and those constructed in \cite{LW}. Furthermore in our situation $M$ need not admit an invariant Einstein metric and $\bar{g}$ need not be K\"ahler-Einstein.

The strategy of the proof of Theorem \ref{main-A} is as follows. First we observe that, since the torus $\fT$ is assumed to be maximal, the subspace of $\fG$-invariant submersion metrics with respect to $\fT \to M \to N$ is preserved by the Ricci flow, and that it can be analytically extended to the bigger space of {\it generalized submersion metrics}, which contains a neighborhood of the collapsed metric $0 \oplus \bar{g}$. We also compactify the problem by projecting the Ricci flow to the unit sphere of generalized submersion metrics, with respect to some carefully chosen background inner product depending on $\bar{g}$, obtaining what we call the {\it $\bar{g}$-projected Ricci flow}. Then, we use an enhanced version of the Stable Manifold Theorem to produce ancient solutions to the $\bar{g}$-projected Ricci flow which collapse to $0 \oplus \bar{g}$ as $t \to -\infty$. Finally, we prove that some of these solutions of generalized submersion metrics are actually positive definite, and that the corresponding solutions to the Ricci flow, obtained by rescaling and time reparametrization, are still ancient.

\smallskip
The paper is organized as follows.
In Section \ref{sect:prel}, we recall some facts about compact homogeneous spaces, toral $\fH$-subalgebras and ancient solutions to the Ricci flow.
In Section \ref{sect:projRF}, we introduce the fundamental tools for proving our main result, namely the space of generalized submersion metrics and the projected Ricci flow.
In Section \ref{sect:mainproof} we prove Theorem \ref{main-A}.
In Section \ref{sect:examples} we construct examples of collapsed ancient solutions and prove Corollary \ref{main-B}.

\medskip

\noindent {\itshape Acknowledgements.\ }The authors would like to thank Christoph B\"ohm and Wolfgang Ziller for their invaluable comments and suggestions. They also would like to thank the anonymous referee for their careful reading of the manuscript.

\medskip
\section{Preliminaries on compact homogeneous spaces} \label{sect:prel}
\setcounter{equation} 0

\subsection{The space of invariant metrics} \hfill \par

Let $M=\fG/\fH$ be a compact, connected and almost-effective $m$-dimensional homogeneous space, where $\fG$ is a compact Lie group and $\fH$ a closed subgroup. Furthermore assume that $M$ is not a torus. Notice that neither $\fG$ nor $\fH$ are assumed to be connected.

Fix an $\Ad(\fG)$-invariant Euclidean inner product $Q$ on the Lie algebra $\gg \= \Lie(\fG)$ and denote by $\gm$ the $Q$-orthogonal complement of $\gh \= \Lie(\fH)$ in $\gg$. By means of the canonical identification $\gm \simeq T_{e\fH}M$ given by the evaluation map
$$
V \mapsto V^*_{e\fH} \= \tfrac{\diff}{\diff s} \exp(sV)\fH\big|_{s=0} \,\, ,
$$
we identify any $\fG$-invariant tensor field on $M$ with the corresponding $\Ad(\fH)$-invariant tensor on $\gm$. The restriction $Q_{\gm} \= Q|_{\gm \otimes \gm}$ defines a {\it normal} $\fG$-invariant Riemannian metric on $M$.

We denote by $\eM^{\fG}_{\mathsmaller M}$ the set of $\fG$-invariant Riemannian metrics on $M$, which is identified with the linear space of $Q_{\gm}$-symmetric, $\Ad(\fH)$-invariant, positive-definite endomorphisms of $\gm$, {\it i.e.\ }
\begin{equation} \label{spaceinvmetrics}
\eM^{\fG}_{\mathsmaller M} = \Sym_+(\gm,Q_{\gm})^{\Ad(\fH)} \,\, ,
\end{equation}
by means of the correspondence
\begin{equation} \label{endmetric}
g \mapsto P_g \,\, , \quad Q_{\gm}(P_g.V_1,V_2) \=  g(V_1,V_2) \quad {\text{ for any } V_1, V_2 \in \gm} \,\, .
\end{equation}
From now on, we will always {identify a metric with the associated endomorphism via \eqref{endmetric}.}

We recall that \eqref{spaceinvmetrics} provides the set $\eM^{\fG}_{\mathsmaller M}$ with a structure of finite-dimensional smooth manifold. Moreover, the natural $L^2$-metric defined by
$$
\langle B_1, B_2\rangle_P \= \det(P)^{\frac12}\Tr(P^{-1}.B_1.P^{-1}.B_2) \quad \text{ for any } B_1, B_2 \in T_P\eM^{\fG}_{\mathsmaller M} = \Sym(\gm,Q_{\gm})^{\Ad(\fH)}
$$
turns $\eM^{\fG}_{\mathsmaller M}$ into a Riemannian symmetric space of non-compact type and the subset
$$
\eM^{\fG}_{{\mathsmaller M},1} \= \{P \in \Sym_+(\gm,Q_{\gm})^{\Ad(\fH)} : \det(P)=1 \}
$$
of unit volume $\fG$-invariant Riemannian metrics into a totally geodesic submanifold.  \smallskip

For any Riemannian metric $P \in \eM^{\fG}_{\mathsmaller M}$, we consider the $\Ad(\fH)$-invariant map
$$
S_{\mathsmaller M}(P) : \gm \to \End(\gm)
$$
defined by (see \cite[Thm.\ 3.3, Ch.\ X]{KN2})
\begin{multline} \label{eq:SM}
-2Q_{\gm}(S_{\mathsmaller M}(P)(V_1).V_2,V_3) \= Q_{\gm}([V_1,V_2]_{\gm},V_3)  \\
+Q_{\gm}([P^{-1}.V_3,V_1]_{\gm},P.V_2) +Q_{\gm}([P^{-1}.V_3,V_2]_{\gm},P.V_1) \,\, .
\end{multline}
Here, the symbol $[V_1,V_2]_{\gm}$ denotes the $Q$-orthogonal projection of $[V_1,V_2]$ on $\gm$. The map $S_{\mathsmaller M}(P)$, which is denoted by $-\Lambda_{\gm}$ in \cite{KN2}, corresponds to the $\fG$-invariant $(1,2)$-tensor field on $M$ given by the difference between the Ambrose-Singer connection associated to the reductive decomposition $\gg = \gh + \gm$ and the Levi-Civita connection (see {\it e.g.\ }\cite{Ped3}). It is worth mentioning that this tensor encodes all the geometric information about the metric $P$. Indeed, following \cite[Prop.\ 2.3, Ch.\ X]{KN2}, the {\it Riemannian curvature tensor} $\Rm_{\mathsmaller M}(P)$ of $P$ is explicitly expressed in terms of $S_{\mathsmaller M}(P)$ by
\begin{equation} \label{eq:Rm}
\Rm_{\mathsmaller M}(P)(V_1,V_2) = \ad([V_1,V_2]_{\gh}) -[S_{\mathsmaller M}(P)(V_1),S_{\mathsmaller M}(P)(V_2)] -S_{\mathsmaller M}(P)([V_1,V_2]_{\gm}) \,\, ,
\end{equation}
where again the $[V_1,V_2]_{\gh}$ denotes the $Q$-orthogonal projection of $[V_1,V_2]$ on $\gh$. Consequently, the {\it Ricci curvature} $\Ric_{\mathsmaller M}(P)$ of $P$ is
\begin{equation} \label{eq:Ric}
Q_{\gm}(\Ric_{\mathsmaller M}(P).V_1,V_2) \= \Tr\!\big(\Rm_{\mathsmaller M}(P)(V_1,\,\cdot\,).V_2\,\big)
\end{equation}
and the {\it scalar curvature} $\scal_{\mathsmaller M}(P)$ of $P$
\begin{equation} \label{eq:scal}
\scal_{\mathsmaller M}(P) \= \Tr(P^{-1}.\Ric_{\mathsmaller M}(P)) \,\, .
\end{equation}
Notice that, according to \eqref{eq:Ric}, we denote by $\Ric_{\mathsmaller M}$ the endomorphism obtained by raising an index of the Ricci bilinear form by means of the background metric $Q$. Therefore, the standard ``Ricci endomorphism'' corresponds in our notation to $P^{-1}.\Ric_{\mathsmaller M}(P)$.

We also denote by
\begin{equation} \label{eq:tracelessRic}
\Ric^0_{\mathsmaller M}(P) \=  \Ric_{\mathsmaller M}(P) - \frac{\scal_{\mathsmaller M}(P)}{m}P
\end{equation}
the {\it traceless Ricci curvature of $P$} and we recall that $P$ is said to be {\it Einstein} if $\Ric^0_{\mathsmaller M}(P)=0$.

We finally mention that Einstein metrics are the critical points of the {\it normalized scalar curvature functional}
$$
\wt{\scal}_{\mathsmaller M} : \eM^{\fG}_{\mathsmaller M} \to \bR \,\, , \quad \wt{\scal}_{\mathsmaller M}(P) \= \det(P)^{\frac1m}\scal_{\mathsmaller M}(P) \,\, .
$$
Indeed, following \cite[Ch.\ 4]{Bes} the differential of $\wt{\scal}_{\mathsmaller M}$ at $P \in \eM^{\fG}_{\mathsmaller M}$ in the direction of $B \in T_P\eM^{\fG}_{\mathsmaller M}$ is
\begin{equation} \label{diffscal}
\diff\,\wt{\scal}_{\mathsmaller M}|_P(B) = -\det(P)^{\frac{2-m}{2m}}\langle \Ric^0_{\mathsmaller M}(P), B \rangle_P
\end{equation}
and so $\diff\,\wt{\scal}_{\mathsmaller M}|_P=0$ if and only if $P$ is Einstein.

\subsection{Homogeneous torus bundles and the coindex of Einstein metrics} \label{subsec:nondeg} \hfill \par

Let us consider a {\it toral $\fH$-subalgebra $\gk$ of $\gg$}, that is, an $\Ad(\fH)$-invariant Lie subalgebra of $\gg$ which lies properly between $\gh$ and $\gg$ such that $[\gk,\gk] \subset \gh$. Then, if we denote by $\fK^{\zero}$ the connected Lie subgroup of $\fG$ with Lie algebra equal to $\gk$, it turns out that the subgroup $\fK$ generated by $\fH$ and $\fK^{\zero}$ is a (not necessarily closed) Lie subgroup of $\fG$ and $\fT \= \fK/\fH$ is a (immersed) torus in $M$. This gives rise to a (locally defined) homogeneous torus fibration
\begin{equation} \label{torusfib}
\fT = \fK/\fH \to M =\fG/\fH \to N \= \fG/\fK \,\, .
\end{equation}
For more details on this construction, see {\it e.g.\ }\cite[Sect.\ 4]{Boe1}, \cite[Sect.\ 3]{Ped1} and \cite[Prop.\ 6.1]{Ped2}.

At the Lie algebra level, we get the $Q$-orthogonal decomposition
\begin{equation} \label{eq:dec}
\gg=\underbrace{\gh + \gt}_{\gk} + \!\!\!\!\!\!\!\!\!\overbrace{\phantom{aai}\gn}^{\gm} \,\, , \quad \text{ with } \,\,\, \gt \= \Lie(\fT) \,\, , \,\,\, \gn \simeq T_{e\fK}N \,\, .
\end{equation}
We recall that a metric $P \in \eM^{\fG}_{\mathsmaller M}$ is called {\it $\gk$-submersion metric} if  it preserves the decomposition $\gm = \gt + \gn$ and its restriction to the subspace $\gn$ is $\Ad(\fK)$-invariant. We denote by $\eM^{\fG}_{\mathsmaller M}(\gk)$ the subset of all the $\gk$-submersion metrics and observe that it naturally splits as
\begin{equation} \label{def:invsubmetrics}
\eM^{\fG}_{\mathsmaller M}(\gk) = \Sym_+(\gt,Q_{\gt})^{\Ad(\fH)} \oplus \Sym_+(\gn,Q_{\gn})^{\Ad(\fK)} \,\, , \quad P = P_{\gt} \oplus P_{\gn} 
\end{equation}
where $Q_{\gt} \= Q|_{\gt \otimes \gt}$ and $Q_{\gn} \= Q|_{\gn \otimes \gn}$. Notice that any $P \in \eM^{\fG}_{\mathsmaller M}(\gk)$ turns the (locally) homogeneous torus fibration \eqref{torusfib} into a Riemannian submersion with totally geodesic fibers (see {\it e.g.\ }\cite[Sect.\ 3.2]{Ped1}). {Notice that all the metrics in $\eM^{\fG}_{\mathsmaller M}(\gk)$ are invariant under the action of the larger group $\fG{\times}\fT$, which acts on $M = \fG/\fH$ via $(a,n) \cdot b\fH \= abn^{-1}\fH$ with isotropy at the origin $\fH\D\fT \= \{(hn,n) : h \in \fH, n \in \fT \}$.}

Let us consider now a {\it maximal toral $\fH$-subalgebra} of $\gg$, {\it i.e.\ }a toral $\fH$-subalgebra $\gk$ of $\gg$ such that $\fT = \fK/\fH$ is a maximal torus of a compact complement of $\fH^{\zero}$ in $\fN_\fG(\fH^{\zero})^{\zero}$. Here, we denote by $\fH^{\zero}$ the identity component of $\fH$ and by $\fN_\fG(\fH^{\zero})^{\zero}$ the identity component of the normalizer of $\fH^{\zero}$ in $\fG$. Notice that this condition implies that $\fK$ is closed in $\fG$ and hence $N = \fG/\fK$ is a compact homogeneous space. Moreover, it also implies the following

\begin{lemma} \label{lemma:almtriv}
The complement $\gn$ in \eqref{eq:dec} does not contain any $\Ad(\fK)$-invariant submodule on which $\Ad(\fK^{\zero})$ acts trivially.
\end{lemma}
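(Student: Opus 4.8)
The plan is to recast the statement as a claim about centralizers and then exploit the maximality of $\gt$. Since $\fK^{\zero}$ is connected with Lie algebra $\gk$, a vector $v \in \gn$ is fixed by $\Ad(\fK^{\zero})$ if and only if $[\gk,v]=0$, and the fixed subspace $\{v \in \gn : \Ad(\fK^{\zero})v=v\}$ is automatically $\Ad(\fK)$-invariant because $\fK^{\zero}$ is normal in $\fK$. Hence the lemma is equivalent to showing that $\gz_\gg(\gk)\cap\gn = 0$, where $\gz_\gg(\gk)=\gz_\gg(\gh)\cap\gz_\gg(\gt)$ is the centralizer of $\gk$ in $\gg$. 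First I would assume, for contradiction, that some nonzero $v\in\gn$ satisfies $[\gk,v]=0$, that is, $[\gh,v]=0$ and $[\gt,v]=0$.

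The key step is to place $v$ inside the compact complement of $\fH^{\zero}$. Because $v$ centralizes $\gh$, it lies in $\Lie(\fN_\fG(\fH^{\zero})^{\zero})$; since $\gh$ is an ideal of this compact normalizer algebra, its $Q$-orthogonal complement $\gc$ there is again an ideal, and it is precisely the Lie algebra of the compact complement $\fC$ featuring in the hypothesis. As the decomposition $\gg=\gh\oplus\gt\oplus\gn$ is $Q$-orthogonal we have $v\perp\gh$, and therefore $v\in\gc$. Now $\gt$ is contained in $\gc$ and, by hypothesis, is the Lie algebra of a maximal torus $\fT$ of $\fC$, hence a maximal abelian subalgebra of $\gc$. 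Since $\gt$ is abelian and $[\gt,v]=0$, the subspace $\gt\oplus\bR v$ is an abelian subalgebra of $\gc$, and because $v\perp\gt$ with $v\neq 0$ it strictly contains $\gt$. This contradicts the maximality of $\gt$, forcing $v=0$ and proving $\gz_\gg(\gk)\cap\gn=0$.

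The only delicate point — and the step I would treat most carefully — is the structural identification in the second paragraph: that the relevant complement $\gc$ is canonically the $Q$-orthogonal complement of the ideal $\gh$ inside $\Lie(\fN_\fG(\fH^{\zero})^{\zero})$ (using that the orthogonal complement of an ideal with respect to an invariant inner product is again an ideal), and that the maximal torus condition on $\fT$ indeed makes $\gt$ a maximal abelian subalgebra of $\gc$ rather than merely of some conjugate complement. Since the whole argument is carried out at the Lie-algebra level through $\gk=\Lie(\fK^{\zero})$, it is insensitive to the possible disconnectedness of $\fH$ and $\fG$, which is why the hypothesis is phrased in terms of $\fH^{\zero}$ and $\fN_\fG(\fH^{\zero})^{\zero}$. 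Equivalently, one may phrase the final contradiction through the fact that a maximal torus of a compact group is self-centralizing, so that $\gz_\gc(\gt)=\gt$ and hence $v\in\gt\cap\gn=0$.
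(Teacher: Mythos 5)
Your argument is correct, but it takes a genuinely different route from the paper's. The paper stays inside its own framework of toral $\fH$-subalgebras: given an $\Ad(\fK)$-invariant submodule $\tilde\gn\subset\gn$ on which $\Ad(\fK^{\zero})$ acts trivially, it asserts that $\tilde\gk=\gk+\tilde\gn$ is again a toral $\fH$-subalgebra, invokes maximality of $\gk$ to get $\tilde\gk=\gk$, and concludes $\tilde\gn\subset\gk\cap\gn=\{0\}$ by $Q$-orthogonality. You instead reformulate the lemma as $\gz_\gg(\gk)\cap\gn=\{0\}$ and unravel the maximality hypothesis down to its Lie-theoretic core: a centralizing vector $v$ lands in the ideal $\gc=\Lie(\fN_\fG(\fH^{\zero})^{\zero})\cap\gh^{\perp}$, and $\gt\oplus\bR v$ would then be an abelian subalgebra of $\gc$ strictly larger than $\gt$, contradicting that $\gt$ is maximal abelian there. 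Both proofs run on the same engine --- a fixed vector enlarges the torus --- but at different levels, and each buys something. The paper's version is shorter and never leaves the language of Section 2, but it silently leaves to the reader the verification that $\tilde\gk$ is toral (in particular that $[\tilde\gn,\tilde\gn]\subset\gh$) as well as the fact that the maximal-torus definition of maximality implies maximality with respect to inclusion among toral $\fH$-subalgebras; that second implication is essentially your argument. Your single-vector version needs no equivariance checks at all ($\Ad(\fH)$-invariance never enters), which is why, as you note, it is insensitive to disconnectedness of $\fH$ and $\fG$. The one step you flag does require justification, but it closes in two lines: first, $\gt\subset\gc$ holds automatically, since $[\gt,\gh]\subset[\gk,\gk]\subset\gh$ puts $\gt$ in the normalizer while $\gt\subset\gm$ makes it $Q$-orthogonal to $\gh$; second, if $\gc'$ denotes the Lie algebra of whichever compact complement the hypothesis refers to, then the quotient projection $\Lie(\fN_\fG(\fH^{\zero})^{\zero})\to\Lie(\fN_\fG(\fH^{\zero})^{\zero})/\gh$ restricts to Lie algebra isomorphisms on both $\gc'$ and $\gc$ that agree on $\gt$, so maximal-abelianness of $\gt$ in $\gc'$ transfers to $\gc$ without any need to identify $\gc'$ with $\gc$ or to worry about conjugacy of complements. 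With that observation inserted, your proof is complete and, arguably, more self-contained than the paper's.
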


\begin{proof}
Let $\tilde{\gn} \subset \gn$ be an $\Ad(\fK)$-invariant submodule such that $\Ad(\fK^{\zero}).X=\{X\}$ for any $X \in \tilde{\gn}$. Then, this implies that $\tilde{\gk} \= \gk + \tilde{\gn}$ is a toral $\fH$-subalgebra of $\gg$. Since $\gk$ is assumed to be maximal, it follows that $\tilde{\gk} = \gk$ and so $\tilde{\gn} \subset \gk$. Since $\gk$ and $\gn$ are $Q$-orthogonal, we get $\tilde{\gn} = \{0\}$.
\end{proof}

Let now $\bar{P}_{\gn} \in \eM^{\fG}_{{\mathsmaller N},1}$ be a unit volume Einstein metric on $N$. Then $\Ric^0_{\mathsmaller N}(\bar{P}_{\gn})=0$ and so, by \eqref{diffscal}, it follows that
\begin{equation} \label{diffscal2}
\Hess\!\big(\!\scal_{\mathsmaller N}\!|_{\eM^{\fG}_{{\mathsmaller N},1}}\big)\big|_{\bar{P}_{\gn}}(B_1,B_2) = -\big\langle \!\diff\big(\Ric^0_{\mathsmaller N}|_{\eM^{\fG}_{{\mathsmaller N},1}}\big)\big|_{\bar{P}_{\gn}}(B_1), B_2 \big\rangle_{\bar{P}_{\gn}}
\end{equation}
for any $B_1,B_2 \in T_{\bar{P}_{\gn}}\eM^{\fG}_{{\mathsmaller N},1}$. Therefore, in virtue of \eqref{diffscal2} and \cite[Def.\ 3.14]{Lau}, we recall the following notion of coindex for invariant Einstein metrics on $N$.

\begin{definition} \label{def:coindex}
The {\it coindex of a unit volume Einstein metric $\bar{P}_{\gn} \in \eM^{\fG}_{{\mathsmaller N},1}$} is its coindex as a critical point of the restricted scalar curvature functional $\scal_{\mathsmaller N}\!|_{\eM^{\fG}_{{\mathsmaller N},1}}$, {\it i.e.\ }the number of negative eigenvalue of the linear map $\diff\big(\Ric^0_{\mathsmaller N}|_{\eM^{\fG}_{{\mathsmaller N},1}}\big)\big|_{\bar{P}_{\gn}}$.
\end{definition} 

We refer to \cite{Lau,LauW} for a detailed treatment on stability and non-degeneracy of invariant Einstein metrics on homogeneous spaces. 

\subsection{Ancient solutions to the Ricci flow} \label{subsect:ancRF} \hfill \par

We recall that a solution to the {\it Ricci flow} on $M$ is a smooth $1$-parameter family of metrics that evolve in the direction of their Ricci tensors. By diffeomorphism invariance of the Ricci tensor, isometries are preserved by the Ricci flow, and hence one can restrict it to a dynamical system on the space of $\fG$-invariant metrics $\eM^{\fG}_{\mathsmaller M}$, {\it i.e.\ }
$$
P'(t) = -2\Ric_{\mathsmaller M}(P(t)) \,\, , \quad P(0) = P_{\zero} \,\, .
$$
If $P_{\zero} \in \eM^{\fG}_{{\mathsmaller M},1}$, then the {\it normalized Ricci flow} on $M$ starting at $P_{\zero}$ takes the form
$$
\tilde{P}'(t) = -2\Ric^0_{\mathsmaller M}(\tilde{P}(t)) \,\, , \quad \tilde{P}(0) = P_{\zero}
$$
where the traceless Ricci tensor has been defined in \eqref{eq:tracelessRic}. It is well known that the normalized Ricci flow preserves the submanifold $\eM^{\fG}_{{\mathsmaller M},1}$ and that it is equivalent to the Ricci flow up to rescaling and time reparametrization. Moreover, by \eqref{diffscal}, the normalized Ricci flow coincides, up to a positive constant, with the $L^2$-gradient flow of the restricted scalar curvature functional on $\eM^{\fG}_{{\mathsmaller M},1}$. \smallskip

In \cite{BWZ}, the authors studied the global behaviour of the restricted scalar curvature functional on $\eM^{\fG}_{{\mathsmaller M},1}$ in order to prove the existence of Einstein metrics using variational techniques. In particular, the authors proved that for any $\e >0$, the scalar curvature functional satisfies the {\it Palais-Smale compactness condition} on the set
$$
(\eM^{\fG}_{{\mathsmaller M},1})_\e \= \{ P \in \eM^{\fG}_{{\mathsmaller M},1} : \scal_{\mathsmaller M}(P)>\e \} \,\, ,
$$
that is, if $(P^{(n)}) \subset \eM^{\fG}_{{\mathsmaller M},1}$ is a sequence with
$$
\scal_{\mathsmaller M}(P^{(n)}) \to \e \quad \text{ and } \quad \big\langle\Ric^0_{\mathsmaller M}(P^{(n)}),\Ric^0_{\mathsmaller M}(P^{(n)})\big\rangle_{P^{(n)}} \to 0 \quad \text{ as } n \to +\infty \,\, ,
$$
then there exists a subsequence of $(P^{(n)})$ converging in the $\cC^{\infty}$-topology to an Einstein metric $P^{(\infty)} \in \eM^{\fG}_{{\mathsmaller M},1}$, as $n \to +\infty$, with $\scal_{\mathsmaller M}(P^{(\infty)})=\e$. In general, the Palais-Smale compactness condition does not hold on the full space $\eM^{\fG}_{{\mathsmaller M},1}$ due to the existence of the so called {\it 0-Palais-Smale sequences}, which are $(P^{(n)}) \subset \eM^{\fG}_{{\mathsmaller M},1}$ such that $\scal_{\mathsmaller M}(P^{(n)}) \to 0$ and $\big\langle\Ric^0_{\mathsmaller M}(P^{(n)}),\Ric^0_{\mathsmaller M}(P^{(n)})\big\rangle_{P^{(n)}} \to 0$ as $n \to +\infty $. Notice that such sequences cannot admit convergent subsequences since $M$ is not a torus. In fact, the limit of any convergent subsequence would be a Ricci-flat, and hence flat (see \cite{AK}), $\fG$-invariant metric. By \cite[Thm.\ 2.1]{BWZ}, the existence of such a solution implies that $\fG^{\zero}/\fH^{\zero}$ is the total space of a homogeneous torus bundle, where $\fG^{\zero}$ (resp.\ $\fH^{\zero}$) denotes the identity component of $\fG$ (resp.\ $\fH$). More precisely, since $0$-Palais-Smale sequences have bounded sectional curvature by the Gap theorem \cite{BLS}, by \cite{Ped1} we know that the sum of the eigenspaces associated to the shrinking eigenvalues of any $0$-Palais-Smale sequence converges to a reductive complement of $\gh$ into a toral $\fH$-subalgebra $\gk$ of $\gg$ and that such sequences collapse along the fibers of the induced (locally) homogeneous torus fibration \eqref{torusfib} while asymptotically approaching, in a precise sense, a $\gk$-submersion metric. \smallskip

Now let $P(t)$ be the solution to the Ricci flow starting from $P_{\zero} \in \eM^{\fG}_{{\mathsmaller M},1}$ and $\tilde{P}(t)$ the corresponding solution to the normalized Ricci flow. We recall that $P(t)$ (resp.\ $\tilde{P}(t)$) is said to be {\it ancient} if it exists on the time interval $(-\infty, 0]$. It is a well known consequence of the maximum principle that if $P(t)$ is ancient, then it must have {monotonic} non-negative scalar curvature (see {\it e.g.\ }\cite[p.\ 102]{CLN06}). Since the two flows are equivalent up to rescaling and time reparametrization, the same is true for the solution $\tilde{P}(t)$. Furthermore, by \cite{S}, $P(t)$ is ancient if and only if $\tilde{P}(t)$ is ancient. In particular there are exactly two possibilities for the behaviour of the normalized Ricci flow as $t \to -\infty$.

The first possibility is that there exists an $\e>0$ such that $\scal_{\mathsmaller M}(\tilde{P}(t))>\e$ for any $t\leq0$, in which case $\tilde{P}(t)$ (and hence $P(t)$) is {\it non-collapsed} and, by \cite[Thm.\ 5.2]{BLS}, $\tilde{P}(t)$ converges to an Einstein metric as $t\to-\infty$. Since the traceless Ricci tensor is the negative $L^2$-gradient of the functional $\scal_{\mathsmaller M}|_{\eM^{\fG}_{{\mathsmaller M},1}}$, such ancient solutions are known to exist whenever $M$ admits a {\it $\fG$-unstable}, $\fG$-invariant Einstein metric (see {\it e.g.\ }\cite{AC,BLS}). The second possibility is that $\scal_{\mathsmaller M}(\tilde{P}(t)) \to 0$ as $t\to-\infty$. In this case, one can always find a sequence of times $t^{(n)} \to -\infty$ such that $P(t^{(n)})$ is a $0$-Palais-Smale sequence and so $\tilde{P}(t)$ (and hence $P(t)$) is {\it collapsed}. {Indeed, for the sake of the reader, we recall the following

\begin{rem} \label{rem:coll}
A 1-parameter family $\{P(t)\}_{t \in I}$ of $\fG$-invariant metrics, $I \subset \bR$ an interval, is said to be {\it non-collapsed} if there exists $\d>0$ such that
$$
\inj(P(t))\big(|\Rm_{\mathsmaller M}(P(t))|_{\mathsmaller{P(t)}}\big)^{\frac12} \geq \d \quad \text{ for any } t \in I \,\, ,
$$
where $\inj(P)$ denotes the injectivity radius of the metric $P$ at the origin $e\fH$ and $|\cdot|_{\mathsmaller{P}}$ denotes the norm on $\gm$, and hence on the tensor space over $\gm$, induced by $P$. Accordingly, $\{P(t)\}_{t \in I}$ is said to be {\it collapsed} if it is not non-collapsed, {\it i.e.\ }if there exists a sequence $(t^{(n)}) \subset I$ such that
$$
\inj(P(t^{(n)}))\big(|\Rm_{\mathsmaller M}(P(t^{(n)}))|_{\mathsmaller{P(t^{(n)})}}\big)^{\frac12} \to 0 \quad \text{ as } n \to +\infty \,\, .
$$
These properties are invariant under time-depending rescaling and time reparametrization.
\end{rem}

\noindent We also recall that, by \cite{BLS}, the following result holds true (see \cite[Rem.\ 5.3]{BLS}).

\begin{proposition} \label{prop:coll-0PS}
Let $P(t)$ be an ancient solution to the homogeneous Ricci flow on $M=\fG/\fH$ starting from $P_{\zero} \in \eM^{\fG}_{{\mathsmaller M},1}$ and $\tilde{P}(t)$ the corresponding solution to the normalized Ricci flow. Then, $P(t)$ is collapsed if and only if $\scal_{\mathsmaller M}(\tilde{P}(t)) \to 0$ as $t\to-\infty$.
\end{proposition}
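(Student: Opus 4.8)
The plan is to reduce the statement to the gradient-flow structure of the normalized Ricci flow, combined with the dichotomy for the limiting value of the scalar curvature as $t\to-\infty$, and then to invoke the analysis of $0$-Palais--Smale sequences together with \cite[Thm.\ 5.2]{BLS} recalled above. First I would pass to the normalized flow $\tilde P(t)$: by \cite{S} the solution $P(t)$ is ancient if and only if $\tilde P(t)$ is, and by Remark \ref{rem:coll} the collapse condition is invariant under rescaling and time reparametrization, so $P(t)$ is collapsed if and only if $\tilde P(t)$ is. Since $\det(\tilde P(t))=1$, formula \eqref{diffscal} shows that the $L^2$-gradient of $\scal_{\mathsmaller M}|_{\eM^{\fG}_{{\mathsmaller M},1}}$ at $\tilde P$ is $-\Ric^0_{\mathsmaller M}(\tilde P)$ (note that $\Ric^0_{\mathsmaller M}(\tilde P)$ is indeed tangent to $\eM^{\fG}_{{\mathsmaller M},1}$, as $\Tr(\tilde P^{-1}\Ric^0_{\mathsmaller M}(\tilde P))=0$), so the normalized flow is twice its gradient ascent flow. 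Hence
\begin{equation*}
\tfrac{\diff}{\diff t}\scal_{\mathsmaller M}(\tilde P(t)) = 2\big\langle \Ric^0_{\mathsmaller M}(\tilde P(t)), \Ric^0_{\mathsmaller M}(\tilde P(t))\big\rangle_{\tilde P(t)} \geq 0 \,\, ,
\end{equation*}
so $\scal_{\mathsmaller M}(\tilde P(t))$ is non-decreasing in $t$; as $P(t)$ is ancient it is also non-negative, so the limit $\e_0 \= \lim_{t\to-\infty}\scal_{\mathsmaller M}(\tilde P(t)) \geq 0$ exists. It therefore suffices to show that $P(t)$ is collapsed precisely when $\e_0 = 0$.

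Suppose first that $\e_0 = 0$. Integrating the identity above over $(-\infty,0]$ gives
\begin{equation*}
\int_{-\infty}^0 2\big\langle \Ric^0_{\mathsmaller M}(\tilde P(t)), \Ric^0_{\mathsmaller M}(\tilde P(t))\big\rangle_{\tilde P(t)}\,\diff t = \scal_{\mathsmaller M}(\tilde P(0)) - \e_0 < \infty \,\, ,
\end{equation*}
so there is a sequence $t^{(n)} \to -\infty$ along which $\big\langle \Ric^0_{\mathsmaller M}(\tilde P(t^{(n)})), \Ric^0_{\mathsmaller M}(\tilde P(t^{(n)}))\big\rangle_{\tilde P(t^{(n)})} \to 0$. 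Together with $\scal_{\mathsmaller M}(\tilde P(t^{(n)})) \to \e_0 = 0$, this exhibits $(\tilde P(t^{(n)}))$ as a $0$-Palais--Smale sequence. By the structure theory of such sequences recalled above --- the Gap theorem of \cite{BLS} together with \cite{BWZ,Ped1} --- they collapse along the fibers of the induced homogeneous torus fibration, i.e.\ the injectivity-radius/curvature product of Remark \ref{rem:coll} tends to $0$ along $t^{(n)}$; hence $\tilde P(t)$, and therefore $P(t)$, is collapsed.

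Suppose instead that $\e_0 > 0$. Then $\scal_{\mathsmaller M}(\tilde P(t)) \geq \e_0$ for every $t \leq 0$, so the entire backward trajectory lies in $(\eM^{\fG}_{{\mathsmaller M},1})_{\e_0}$, where the Palais--Smale compactness condition holds. By \cite[Thm.\ 5.2]{BLS} the solution $\tilde P(t)$ then converges to an Einstein metric as $t\to-\infty$ and is non-collapsed, whence $P(t)$ is non-collapsed. Since the two cases are exhaustive and mutually exclusive, $P(t)$ is collapsed if and only if $\e_0 = 0$, which by definition of $\e_0$ is exactly the condition $\scal_{\mathsmaller M}(\tilde P(t)) \to 0$ as $t\to-\infty$. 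I expect the flow-theoretic dichotomy to be routine; the real content lies in the two imported inputs that tie it to the collapse condition, namely that a $0$-Palais--Smale sequence genuinely collapses in the sense of Remark \ref{rem:coll} (which needs the uniform curvature bounds from the Gap theorem and the torus-fibration description of \cite{Ped1}) and the convergence/non-collapsing statement \cite[Thm.\ 5.2]{BLS} in the case $\e_0 > 0$.
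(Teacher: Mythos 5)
Your proof is correct --- modulo the same external inputs the paper itself leans on --- but it takes a genuinely different route from the paper's own proof. The paper's proof consists of quoting \cite[Thm.\ 5.2]{BLS} as the characterization ``$P(t)$ is non-collapsed if and only if every sequence $t^{(n)}\to-\infty$ admits a subsequence along which $|t^{(n_i)}|^{-1}P(t^{(n_i)})$ converges in the $\cC^\infty$-topology to an Einstein metric,'' and then rephrasing this as subconvergence of $\tilde P(t^{(n_i)})$ to a limit metric in $\eM^{\fG}_{{\mathsmaller M},1}$; the equivalence with $\scal_{\mathsmaller M}(\tilde P(t))\to 0$ is left implicit (it follows from monotonicity of the scalar curvature together with the fact that the limit Einstein metrics have positive scalar curvature). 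You instead expand the dichotomy sketched in Section \ref{subsect:ancRF} into a proof: the gradient-flow identity $\tfrac{\diff}{\diff t}\scal_{\mathsmaller M}(\tilde P)=2\langle\Ric^0_{\mathsmaller M}(\tilde P),\Ric^0_{\mathsmaller M}(\tilde P)\rangle_{\tilde P}$ gives the existence of $\e_0=\lim_{t\to-\infty}\scal_{\mathsmaller M}(\tilde P(t))\geq 0$; when $\e_0=0$, integrating this identity supplies the sequence of times making $(\tilde P(t^{(n)}))$ a $0$-Palais--Smale sequence --- this is exactly the ``one can always find'' claim that Section \ref{subsect:ancRF} states without proof --- and the recalled structure theory (Gap theorem of \cite{BLS} plus \cite{Ped1}) yields collapse in the sense of Remark \ref{rem:coll}; when $\e_0>0$ you defer to \cite[Thm.\ 5.2]{BLS}. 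Your route makes the collapsed direction explicit and quantitative, while the paper's is shorter because both directions are folded into the single subconvergence characterization. Two caveats: (i) in the case $\e_0>0$ you read \cite[Thm.\ 5.2]{BLS} as ``a uniform positive lower bound on $\scal_{\mathsmaller M}(\tilde P(t))$ implies non-collapsed and convergent,'' which matches how Section \ref{subsect:ancRF} uses that theorem but is a stronger reading than the subconvergence equivalence extracted in the paper's proof, so --- exactly as in the paper --- the nontrivial direction rests entirely on that citation; (ii) the Palais--Smale compactness you invoke in that case is never actually used in your argument and could be deleted.
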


\begin{proof}
By \cite[Thm.\ 5.2]{BLS}, it follows that $P(t)$ is non-collapsed if and only if for any sequence $t^{(n)} \to -\infty$ there exists a subsequence $(t^{(n_i)}) \subset (t^{(n)})$ such that $|t^{(n_i)}|^{-1}P(t^{(n_i)})$ converges in the $\cC^{\infty}$-topology to a limit Einstein metric on $M$ as $i \to +\infty$. Moreover, since $\tilde{P}(t)$ coincides, up to time reparametrization, to the volume-normalized family $\det(P(t))^{-\frac1m}P(t)$, it follows that: $P(t)$ is non-collapsed if and only if for any sequence $t^{(n)} \to -\infty$ there exists a subsequence $(t^{(n_i)}) \subset (t^{(n)})$ such that $\tilde{P}(t^{(n_i)})$ converges in the $\cC^{\infty}$-topology to a limit $\fG$-invariant metric in $\eM^{\fG}_{{\mathsmaller M},1}$ as $i \to +\infty$. This concludes the proof.
\end{proof}

Notice that, as a byproduct of Proposition \ref{prop:coll-0PS} and \cite{Ped1}, $M$ admits a collapsed ancient solution to the Ricci flow only if it is the total space of a homogenous torus bundle (see also \cite[Rem.\ 5.3]{BLS}).
}

\medskip
\section{The projected Ricci flow} \label{sect:projRF}
\setcounter{equation} 0

In this section, we introduce two important tools that will be fundamental for the proof of our main results, namely the space of {\it generalized submersion metrics} and the {\it projected Ricci tensor}. In the following, we consider a compact homogeneous space $M=\fG/\fH$ and we use the same notation introduced in Section \ref{sect:prel}.

\subsection{The space of generalized submersion metrics} \hfill \par

Consider a maximal toral $\fH$-subalgebra $\gk$ of $\gg$ and the associated homogeneous torus fibration \eqref{torusfib}. We introduce the space of {\it generalized $\gk$-submersion metrics on $M$} as
\begin{equation} \label{def:gensubmetrics}
\widehat{\eM^{\fG}_{\mathsmaller M}(\gk)} \= \Sym(\gt,Q_{\gt})^{\Ad(\fH)} \oplus \Sym_+(\gn,Q_{\gn})^{\Ad(\fK)} \,\, ,
\end{equation}
{\it i.e.\ }we allow the metric on $\gt$ to be degenerate, and we prove the following crucial result.

\begin{prop} \label{smoothextension}
The Ricci curvature $\Ric_{\mathsmaller M}$ can be extended analytically to the space $\widehat{\eM^{\fG}_{\mathsmaller M}(\gk)}$ of ge\-ne\-ra\-li\-zed $\gk$-submersion metrics on $M$. 
\end{prop}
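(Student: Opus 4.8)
The plan is to localize the only mechanism by which the Ricci curvature can fail to extend, namely the appearance of the inverse metric $P^{-1}$ in \eqref{eq:SM} when the fiber metric $P_\gt$ degenerates, and then to show that every such singular contribution is multiplied back by a factor of $P_\gt$ inside the curvature, so that $\Ric_{\mathsmaller M}$ contains no negative powers of $P_\gt$. The two structural facts I would use repeatedly are consequences of maximality and of the definition of a toral $\fH$-subalgebra: since $\gk=\gh+\gt$ satisfies $[\gk,\gk]\subset\gh$ we have $[\gt,\gt]\subset\gh$, hence $[\gt,\gt]_\gm=0$; and since $\gn$ is $\Ad(\fK)$-invariant we have $[\gt,\gn]\subset\gn$, i.e. $[\gt,\gn]_\gt=0$. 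Thus only the base bracket $[\gn,\gn]$ can have a nonzero $\gt$-component.

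First I would block-decompose $S_{\mathsmaller M}(P)$ along $\gm=\gt\oplus\gn$, feeding the four combinations of $\gt$- and $\gn$-arguments into \eqref{eq:SM} and reading off each block by taking the test vector $V_3$ inside $\gt$ or $\gn$. Using the two bracket relations the computation collapses: $S_{\mathsmaller M}(P)$ vanishes on $\gt\otimes\gt$, the blocks involving one $\gt$- and one $\gn$-argument involve only $P_\gt$ (never $P_\gt^{-1}$) together with $P_\gn^{-1}$, and the \emph{sole} block in which $P_\gt^{-1}$ survives is the vertical part of $S_{\mathsmaller M}(P)(Y)Y'$ for $Y,Y'\in\gn$, which comes out in the form
\[
S_{\mathsmaller M}(P)(Y)Y'\big|_{\gt}
= -\tfrac12[Y,Y']_{\gt}-\tfrac12\,P_{\gt}^{-1}\,C_{P_{\gn}}(Y,Y'),
\]
where $C_{P_{\gn}}(Y,Y')\in\gt$ is symmetric, $\Ad(\fH)$-equivariant, and built only from the brackets $\gt\times\gn\to\gn$ and the metric $P_\gn$. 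This is exactly the O'Neill integrability ("$A$") tensor of the submersion, and it is the only source of the singularity.

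Next I would substitute this block form into \eqref{eq:Rm} and \eqref{eq:Ric}, noting first that the trace in \eqref{eq:Ric} may be taken in the fixed $Q$-orthonormal basis of $\gm$, so no metric inverse enters through the trace itself. Because the singular block maps $\gn\to\gt$, its $\gt$-output always lands in a slot where $S_{\mathsmaller M}$ is regular, so it can never be chained with a second singular block in $[S_{\mathsmaller M}(V_1),S_{\mathsmaller M}(V_2)]$; hence at most one factor of $P_\gt^{-1}$ occurs in any term, ruling out $P_\gt^{-2}$. The substance of the proposition is that these lone factors of $P_\gt^{-1}$ cancel, and the cleanest bookkeeping is to recognize the result as O'Neill's formula for the Ricci tensor of a Riemannian submersion with totally geodesic fibers (which $\fT\to M\to N$ is, as recalled after \eqref{def:invsubmetrics}), wherein $A$ enters only through the combinations $(A\,\cdot,A\,\cdot)$ on the fiber, $(A_{\,\cdot},A_{\,\cdot})$ on the base, and a single divergence $\delta A$ in the mixed block. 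Tracking powers of $P_\gt$ with $A$ the metric-independent map $(Y,Y')\mapsto\tfrac12[Y,Y']_\gt$: the fiber--fiber block equals $\Ric^{\text{fib}}+(AU,AV)=0+O(P_\gt^{2})$ (the torus fiber is flat), the base--base block equals $\Ric_{\mathsmaller N}(P_\gn)-2(A_X,A_Y)$ with the base Ricci independent of $P_\gt$ and the correction linear in $P_\gt$, and the mixed block $\langle(\delta A)\,\cdot,\,\cdot\rangle$ is again linear in $P_\gt$. Each is therefore a polynomial in the entries of $P_\gt$ whose coefficients are rational in $P_\gn$ with denominator a power of $\det(P_\gn)$, hence real-analytic on the cone $P_\gn>0$, and contains no negative powers of $P_\gt$.

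It then follows that every block of $\Ric_{\mathsmaller M}(P)$, read through the fixed identification \eqref{eq:Ric}, extends to a real-analytic $\End(\gm)$-valued map on all of $\Sym(\gt,Q_\gt)^{\Ad(\fH)}\oplus\Sym_+(\gn,Q_\gn)^{\Ad(\fK)}=\widehat{\eM^{\fG}_{\mathsmaller M}(\gk)}$ with no positivity required on $P_\gt$; $\Ad(\fH)$-equivariance is preserved throughout, so the extension lands in the correct space, and evaluation at $P_\gt=0$ recovers $\Ric_{\mathsmaller N}(P_\gn)$ on $\gn$ and $0$ on $\gt$. The delicate point, which I expect to be the main obstacle, is precisely the cancellation underlying Step three: verifying that the naively singular $P_\gt^{-1}$-terms reorganize into the manifestly polynomial quantities $(AU,AV)$, $(A_X,A_Y)$ and $\delta A$. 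Rather than invoking O'Neill as a black box, one can perform the cancellation directly from \eqref{eq:Rm}--\eqref{eq:Ric}, observing that the factor $P_\gt^{-1}$ is always contracted through a pairing $Q_\gt(P_\gt\,\cdot,\,\cdot)$ — arising either from the $P\,V$ terms of \eqref{eq:SM} or from an adjacent regular $\gt\otimes\gn$-block — against a compensating $P_\gt$, after which the residual $P_\gt$-free brackets collapse into $C_{P_\gn}$ and its adjoint. This elementary but lengthy check is the computational heart of the argument.
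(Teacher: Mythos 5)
Your proposal has the right skeleton (block-compute $S_{\mathsmaller M}$ from \eqref{eq:SM} along $\gm=\gt\oplus\gn$ and show no negative powers of $P_{\gt}$ survive in the curvature), but its core computational claim is wrong, and the error conceals the one hypothesis the whole proposition rests on. You assert that the vertical block $S_{\mathsmaller M}(P)(Y).Y'|_{\gt}$ contains a surviving singular term $-\tfrac12 P_{\gt}^{-1}.C_{P_{\gn}}(Y,Y')$. Unwinding \eqref{eq:SM} with $V_1=Y$, $V_2=Y'$, $V_3=T\in\gt$, your $C_{P_{\gn}}(Y,Y')$ is the $Q_{\gt}$-dual of the linear form $T\mapsto Q(\ad(T).Y,P_{\gn}.Y')+Q(\ad(T).Y',P_{\gn}.Y)$, and this form vanishes identically:
\begin{equation*}
Q(\ad(T).Y,P_{\gn}.Y') \,=\, -\,Q(Y,\ad(T).P_{\gn}.Y') \,=\, -\,Q(Y,P_{\gn}.\ad(T).Y') \,=\, -\,Q(P_{\gn}.Y,\ad(T).Y') \,=\, -\,Q(\ad(T).Y',P_{\gn}.Y) \,\, ,
\end{equation*}
where the first equality uses that $\ad(T)$ is $Q$-skew ($\Ad(\fG)$-invariance of $Q$), the second uses $[\ad(T)|_{\gn},P_{\gn}]=0$, i.e.\ the $\Ad(\fK)$-invariance of $P_{\gn}$ built into the definition of a $\gk$-submersion metric (see \eqref{def:invsubmetrics}), and the third uses that $P_{\gn}$ is $Q$-symmetric. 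Hence $C_{P_{\gn}}\equiv 0$: no $P_{\gt}^{-1}$ survives in \emph{any} block of $S_{\mathsmaller M}(P)$. This is exactly the paper's proof: formula \eqref{Ssub} is polynomial in $P_{\gt}$ and analytic in $P_{\gn}$, so \eqref{eq:Rm} and \eqref{eq:Ric} provide the analytic extension immediately, with no O'Neill bookkeeping at all. Note that the two structural facts you do invoke ($[\gt,\gt]_{\gm}=0$ and $[\gt,\gn]_{\gt}=0$) are purely Lie-algebraic and do not suffice; the $\Ad(\fK)$-invariance of $P_{\gn}$, which you never use, is precisely what kills the singularity.

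This gap is not cosmetic, because the fallback you rely on --- deferring the cancellation to an O'Neill-type computation inside $\Ric_{\mathsmaller M}$ --- would not close as described. Your mechanism is that every $P_{\gt}^{-1}$ gets chained against a compensating $P_{\gt}$ from an adjacent regular block. That works for the commutator terms $[S_{\mathsmaller M}(P)(V_1),S_{\mathsmaller M}(P)(V_2)]$ in \eqref{eq:Rm}, since the $\gt\to\gn$ block of $S_{\mathsmaller M}(P)$ carries a factor $P_{\gt}$; but it fails for the term $-S_{\mathsmaller M}(P)([V_1,V_2]_{\gm})$: taking $V_2=T\in\gt$ in the trace \eqref{eq:Ric}, the would-be singular vertical output of $S_{\mathsmaller M}(P)([X,T]).Y$ (note $[X,T]\in\gn$) is paired directly against a $Q$-orthonormal basis of $\gt$, with no compensating factor of $P_{\gt}$ anywhere in the chain. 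If $C_{P_{\gn}}$ were nonzero, these contributions would genuinely obstruct regularity of the trace; so the ``elementary but lengthy check'' you postpone is not routine bookkeeping but exactly the vanishing displayed above. (There is also an internal inconsistency: you first identify $C_{P_{\gn}}$ with the O'Neill integrability tensor, and later --- correctly --- describe the O'Neill tensor as the metric-independent map $(Y,Y')\mapsto\tfrac12[Y,Y']_{\gt}$; it is the latter, i.e.\ the \emph{regular} term of the block, not the singular one.)
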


\begin{proof}
We write $P=P_{\gt} \oplus P_{\gn}$ for any $P \in \eM^{\fG}_{\mathsmaller M}(\gk)$ and we observe that $[\ad(T),P_{\gn}](X)=0$ for any $T \in \gt$, $X \in \gn$. Hence, since $Q$ is $\Ad(\fG)$-invariant, a straightforward computation shows that the tensor $S_{\mathsmaller M}(P)$ defined by \eqref{eq:SM} is explicitly given by
\begin{equation} \begin{aligned} \label{Ssub}
S_{\mathsmaller M}(P)(T).\tilde{T} &= 0 \,\, , \\
S_{\mathsmaller M}(P)(T).Y &= -\ad(T).Y +\tfrac12P_{\gn}^{-1}.\ad(P_{\gt}.T).Y \,\, , \\
S_{\mathsmaller M}(P)(X).\tilde{T} &= -\tfrac12P_{\gn}^{-1}.\ad(X).P_{\gt}.\tilde{T} \,\, , \\
S_{\mathsmaller M}(P)(X).Y &= -\tfrac12\pi_{\gm}.\ad(X).Y -\tfrac12P_{\gn}^{-1}.\pi_{\gn}.(\ad(X).P_{\gn}-\ad(P_{\gn}.X)).Y \,\, ,
\end{aligned} \end{equation}
where $X,Y \in \gn$ and $T,\tilde{T} \in \gt$. Here, we denote by $\pi_{\gm}: \gg \to \gm$ and $\pi_{\gn} : \gg \to \gn$ the $Q$-orthogonal projections onto $\gm$ and $\gn$, respectively. In particular, \eqref{Ssub} implies that $S_{\mathsmaller M}(P)$ can be defined for any generalized metric $P \in \widehat{\eM^{\fG}_{\mathsmaller M}(\gk)}$ and that it depends analytically on $P$. Therefore, formulas \eqref{eq:Rm} and \eqref{eq:Ric} can be used to define $\Rm_{\mathsmaller M}(P)$ and $\Ric_{\mathsmaller M}(P)$ for any $P \in \widehat{\eM^{\fG}_{\mathsmaller M}(\gk)}$.
\end{proof}

Moreover, by using Schur's Lemma, we get

\begin{lemma}
For any $P \in \widehat{\eM^{\fG}_{\mathsmaller M}(\gk)}$, it holds that
\begin{equation} \label{splittingRic}
\Ric_{\mathsmaller M}(P) \in \Sym(\gt,Q_{\gt})^{\Ad(\fH)} \oplus \Sym(\gn,Q_{\gn})^{\Ad(\fK)} \,\, .
\end{equation}
\end{lemma}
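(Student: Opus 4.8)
The plan is to upgrade the $\Ad(\fH)$-equivariance of $\Ric_{\mathsmaller M}(P)$, which is manifest from the extended formulas of Proposition \ref{smoothextension}, to equivariance under the isotropy group of the larger $\fG{\times}\fT$-action, and then to read off the splitting \eqref{splittingRic} from Schur's Lemma. First I would recall that every genuine submersion metric $P\in\eM^{\fG}_{\mathsmaller M}(\gk)$ is invariant under $\fG{\times}\fT$ (see the remark following \eqref{def:invsubmetrics}), with isotropy $\fH\D\fT=\{(hn,n):h\in\fH,\ n\in\fT\}$ at the origin. Since the Ricci endomorphism of an invariant metric lies in the commutant of the isotropy representation, $\Ric_{\mathsmaller M}(P)$ commutes with the isotropy representation $\rho$ of $\fH\D\fT$ on $\gm$, which one computes to be $\rho(hn,n)=\pi_{\gm}\circ\Ad(hn)|_{\gm}$. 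The commutator $[\Ric_{\mathsmaller M}(P),\rho(hn,n)]$ depends analytically on $P$ by Proposition \ref{smoothextension} and vanishes on $\eM^{\fG}_{\mathsmaller M}(\gk)$, which is a nonempty open subset of the connected real-analytic manifold $\widehat{\eM^{\fG}_{\mathsmaller M}(\gk)}$; hence it vanishes identically by analytic continuation, so the equivariance persists for every generalized submersion metric.

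Next I would analyze $\rho$ block by block with respect to $\gm=\gt\oplus\gn$. Since $\gn$ is $\Ad(\fK)$-invariant and $\gk$ normalizes $\gh$, the map $\rho$ preserves the splitting $\gt\oplus\gn$, acting on $\gn$ as the restriction of $\Ad(\fK)$ and on $\gt$ through the finite quotient $\fH/\fH^{\zero}$ only. Indeed, since $[\gt,\gt]\subset\gh$ and $[\gh,\gt]=0$ (both consequences of $\gk$ being a toral $\fH$-subalgebra, cf.\ \eqref{eq:dec}), the connected group $\fK^{\zero}$, generated by $\fH^{\zero}$ and $\exp(\gt)$, acts trivially on $\gt$. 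Writing $\Ric_{\mathsmaller M}(P)$ in block form, its diagonal blocks then automatically commute with $\Ad(\fK)|_{\gn}$ and with $\Ad(\fH)|_{\gt}$, which gives the desired $\Ad(\fK)$- and $\Ad(\fH)$-invariance. For the off-diagonal block $A\colon\gt\to\gn$, equivariance under $\fK^{\zero}$ (which acts trivially on $\gt$) shows that $\operatorname{Im}(A)$ is an $\Ad(\fK)$-submodule of $\gn$ fixed pointwise by $\Ad(\fK^{\zero})$; by Lemma \ref{lemma:almtriv} this forces $\operatorname{Im}(A)=\{0\}$, hence $A=0$, and its $Q$-adjoint block $\gn\to\gt$ vanishes as well.

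The main obstacle is exactly this vanishing of the off-diagonal blocks, together with the strengthening from $\Ad(\fH)$- to $\Ad(\fK)$-invariance on $\gn$: the decomposition $\gt\oplus\gn$ is only $\Ad(\fH)$-invariant, so neither conclusion is formal. Both hinge on the extra torus symmetry together with the maximality of $\gk$ encoded in Lemma \ref{lemma:almtriv}, which guarantees that $\gn$ has no $\Ad(\fK^{\zero})$-fixed directions able to receive the image of $A$. A secondary technical point to handle with care is the justification of the equivariance for \emph{degenerate} $P$, where ``naturality of the Ricci tensor'' is not literally available and must be replaced by the analytic-continuation argument above (or, alternatively, by a direct inspection of the explicit expressions \eqref{Ssub}, which are visibly $\rho$-equivariant block by block).
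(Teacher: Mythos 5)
Your proposal is correct, and its skeleton is the same as the paper's: establish that the extended Ricci endomorphism is $\Ad(\fK)$-equivariant, then split it into blocks with respect to $\gm=\gt+\gn$ and use Schur's Lemma together with Lemma \ref{lemma:almtriv} to kill the off-diagonal parts. Where you genuinely diverge is in how equivariance is obtained for \emph{degenerate} $P$. The paper argues purely algebraically and directly at the generalized metric: since $\fK=\fH\fK^{\zero}$ and $\Ad(\fK^{\zero})|_{\gt}$ is trivial, a generalized metric $P=P_{\gt}\oplus P_{\gn}\in\widehat{\eM^{\fG}_{\mathsmaller M}(\gk)}$ is itself $\Ad(\fK)$-invariant (no positivity needed), as is the decomposition \eqref{eq:dec}; the formulas \eqref{eq:SM}, or rather their extension \eqref{Ssub}, then immediately give $\Ad(\fK)$-equivariance of $S_{\mathsmaller M}(P)$ and hence of $\Ric_{\mathsmaller M}(P)$, with no continuation argument at all. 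You instead invoke geometric naturality of the Ricci tensor under the $\fG{\times}\fT$-action, which is only meaningful for positive-definite metrics, and then pass to degenerate $P$ by analytic continuation from the open subset $\eM^{\fG}_{\mathsmaller M}(\gk)$; this is valid (note that $\widehat{\eM^{\fG}_{\mathsmaller M}(\gk)}$ is indeed connected, being the product of a vector space with an open convex cone, a point worth stating), and it explains conceptually where the extra symmetry comes from, at the cost of an extra analytic step that the paper's direct verification avoids -- the alternative you mention in passing, direct inspection of \eqref{Ssub}, is exactly the paper's route. Conversely, your write-up is more detailed than the paper's at the Schur step: the paper compresses the vanishing of the off-diagonal blocks into ``follows from Schur's Lemma'', while you spell out that the image of the block $\gt\to\gn$ is an $\Ad(\fK)$-invariant submodule of $\gn$ fixed pointwise by $\Ad(\fK^{\zero})$, which Lemma \ref{lemma:almtriv} forces to vanish. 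The only point you leave implicit is that the $Q$-symmetry of the extended $\Ric_{\mathsmaller M}(P)$, which you use to dispose of the block $\gn\to\gt$ via adjoints, itself needs a one-line continuation from the positive-definite case (or, alternatively, the same Schur argument applied directly to that block).
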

\begin{proof}
Notice that, by hypothesis, the submodule $\gt$ is $\Ad(\fK)$-invariant and the representation $\Ad(\fK^{\zero})|_{\gt}$ is trivial. Moreover, by Lemma \ref{lemma:almtriv}, $\gn$ does not contain any $\Ad(\fK)$-invariant submodule on which $\Ad(\fK^{\zero})$ acts trivially.
Fix now $P \in \widehat{\eM^{\fG}_{\mathsmaller M}(\gk)}$ and notice that, since $\fK =\fH \fK^{\zero}$, both $P$ and the decomposition \eqref{eq:dec} are $\Ad(\fK)$-invariant. By \eqref{eq:SM} it follows that $S_{\mathsmaller M}(P)$ is $\Ad(\fK)$-invariant and so $\Ric_{\mathsmaller M}(P)$ is $\Ad(\fK)$-invariant as well. Therefore, the claim follows from Schur's Lemma.
\end{proof}

We are going to use \eqref{Ssub} to compute the differential of the tensor $S_{\mathsmaller M}$ defined in \eqref{eq:SM}. In order to do this, fix a generalized metric $P \in \widehat{\eM^{\fG}_{\mathsmaller M}(\gk)}$ and a tangent direction $B \in T_P\widehat{\eM^{\fG}_{\mathsmaller M}(\gk)}$. Since
\begin{equation} \label{diffinverse}
\tfrac{\diff}{\diff s} (P_{\gn} +sB_{\gn})^{-1} \big|_{s=0} = -P_{\gn}^{-1}.B_{\gn}.P_{\gn}^{-1} \,\, ,
\end{equation}
it follows that the differential $\diff S_{\mathsmaller M}|_{P}(B)$ at $P$ in the direction of $B$ is given by
\begin{equation} \begin{aligned} \label{diffSsub}
\diff S_{\mathsmaller M}|_{P}(B)(T).\tilde{T} &= 0 \,\, , \\
\diff S_{\mathsmaller M}|_{P}(B)(T).Y &= -\tfrac12P_{\gn}^{-1}.B_{\gn}.P_{\gn}^{-1}.\ad(P_{\gt}.T).Y +\tfrac12P_{\gn}^{-1}.\ad(B_{\gt}.T).Y \,\, , \\
\diff S_{\mathsmaller M}|_{P}(B)(X).\tilde{T} &= \frac12P_{\gn}^{-1}.B_{\gn}.P_{\gn}^{-1}.\ad(X).P_{\gt}.\tilde{T} -\tfrac12P_{\gn}^{-1}.\ad(X).B_{\gt}.\tilde{T} \,\, , \\
\diff S_{\mathsmaller M}|_{P}(B)(X).Y &= \tfrac12P_{\gn}^{-1}.B_{\gn}.P_{\gn}^{-1}.\pi_{\gn}.(\ad(X).P_{\gn}-\ad(P_{\gn}.X)).Y \\
&\quad -\tfrac12P_{\gn}^{-1}.\pi_{\gn}.(\ad(X).B_{\gn}-\ad(B_{\gn}.X)).Y \,\, ,
\end{aligned} \end{equation}
where $X,Y\in\mathfrak{n}$ and $T,\tilde{T}\in\mathfrak{t}$. Moreover, by differentiating \eqref{eq:Rm} and \eqref{eq:Ric} at $P$ in the direction of $B$, we get
\begin{equation} \begin{aligned} \label{diffRmRic}
&\diff \Rm_{\mathsmaller M}\!|_P(B)(V_1,V_2) = -[\diff S_{\mathsmaller M}|_{P}(B)(V_1),S_{\mathsmaller M}(P)(V_2)] -[S_{\mathsmaller M}(P)(V_1),\diff S_{\mathsmaller M}|_{P}(B)(V_2)] \\
&\qquad\qquad\qquad\qquad\qquad\quad -\diff S_{\mathsmaller M}|_{P}(B)([V_1,V_2]_{\gm}) \,\, , \\
&Q(\diff \Ric_{\mathsmaller M}\!|_P(B).V_1,V_2) = \Tr(\gm \ni Z \mapsto \diff \Rm_{\mathsmaller M}\!|_P(B)(V_1,Z).V_2) \,\, .
\end{aligned} \end{equation}
Therefore, we obtain the following.

\begin{prop} \label{propRic(0P)}
Fix a metric on the base space $P_{\gn} \in \eM^{\fG}_{\mathsmaller N}$. Then, the extended Ricci curvature satisfies 
\begin{equation} \begin{aligned} \label{Ric(0P)1}
\Ric_{\mathsmaller M}(0\oplus P_{\gn}) &= 0 \oplus \Ric_{\mathsmaller N}(P_{\gn}) \,\, , \\
\diff \Ric_{\mathsmaller M}\!|_{0\oplus P_{\gn}}(0\oplus B_{\gn}) &= 0 \oplus \diff \Ric_{\mathsmaller N}\!|_{P_{\gn}}(B_{\gn})
\end{aligned} \end{equation}
for any horizontal direction $B_{\gn} \in \Sym(\gn,Q_{\gn})^{\Ad(\fK)}$, and
\begin{equation} \label{Ric(0P)2}
\diff \Ric_{\mathsmaller M}\!|_{0\oplus P_{\gn}}(B_{\gt}\oplus 0).T = 0
\end{equation}
for any vertical direction $B_{\gt} \in \Sym(\gt,Q_{\gt})^{\Ad(\fH)}$ and for any $T \in \gt$.
\end{prop}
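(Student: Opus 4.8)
The plan is to evaluate the curvature formulas \eqref{eq:Rm}, \eqref{eq:Ric} and their differentiated versions \eqref{diffSsub}, \eqref{diffRmRic} at the degenerate metric $P = 0\oplus P_\gn$, exploiting three preliminary simplifications. First, substituting $P_\gt = 0$ into \eqref{Ssub} gives $S_{\mathsmaller M}(0\oplus P_\gn)(T).\tilde T = 0$, $S_{\mathsmaller M}(0\oplus P_\gn)(X).\tilde T = 0$ and $S_{\mathsmaller M}(0\oplus P_\gn)(T).Y = -\ad(T).Y \in \gn$, while comparing the last line of \eqref{Ssub} with the analogous expression for $S_{\mathsmaller N}(P_\gn)$ obtained from \eqref{eq:SM} applied to $N = \fG/\fK$ (same formula with $\pi_\gm$ replaced by $\pi_\gn$) yields $S_{\mathsmaller M}(0\oplus P_\gn)(X).Y = S_{\mathsmaller N}(P_\gn)(X).Y - \tfrac12\pi_\gt[X,Y]$, the two summands lying in $\gn$ and $\gt$ respectively. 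Thus every operator $S_{\mathsmaller M}(0\oplus P_\gn)(V)$ annihilates $\gt$, and the sole discrepancy between $S_{\mathsmaller M}(0\oplus P_\gn)$ and $S_{\mathsmaller N}(P_\gn)$ is $\gt$-valued. Second, since $\gk$ is a toral $\fH$-subalgebra, $[\gt,\gh]\subset[\gk,\gk]\subset\gh$ and $[\gh,\gt]\subset\gt$ force $[\gh,\gt] = 0$, a fact I will use repeatedly together with $[\gt,\gn]\subset\gn$. Third, by \eqref{splittingRic} both $\Ric_{\mathsmaller M}$ and its differential are block-diagonal for $\gm = \gt\oplus\gn$, so the off-diagonal parts of all three identities hold automatically and only the $\gt$- and $\gn$-blocks require checking.

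For the $\gt$-block of \eqref{Ric(0P)1} I would expand $Q(\Ric_{\mathsmaller M}(0\oplus P_\gn).T,\tilde T)$ as the trace $\sum_i Q(\Rm_{\mathsmaller M}(0\oplus P_\gn)(T,e_i).\tilde T, e_i)$ over a $Q$-orthonormal basis of $\gm$. Since $S_{\mathsmaller M}(0\oplus P_\gn)(V).\tilde T = 0$ for all $V$, the $[S,S]$ and $S([V_1,V_2]_\gm)$ terms of \eqref{eq:Rm} vanish, leaving $\ad([T,e_i]_\gh).\tilde T$; splitting the basis and using $[\gt,\gn]\subset\gn$ reduces the sum to $-Q(\tilde T,\sum_a[[T,\tau_a],\tau_a])$, which is zero because $[T,\tau_a]\in\gh$ and $[\gh,\gt] = 0$. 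The $\gn$-block is the heart of the matter. Expanding the trace over $\gm = \gt\oplus\gn$, the purely vertical part $\sum_a Q(\Rm_{\mathsmaller M}(X,\tau_a).Y,\tau_a)$ only detects the $\gt$-component of the curvature; carrying out the commutators with the explicit $S_{\mathsmaller M}(0\oplus P_\gn)$ and applying the Jacobi identity collapses it into terms $Q([\tau_a,[X,Y]],\tau_a)$, each vanishing since $\ad(\tau_a)$ is $Q$-skew. For the horizontal part, a term-by-term comparison shows that the $\gn$-component of $\Rm_{\mathsmaller M}(0\oplus P_\gn)(X,n_\alpha).Y$ coincides with $\Rm_{\mathsmaller N}(P_\gn)(X,n_\alpha).Y$: the fiber operators $S_{\mathsmaller M}(0\oplus P_\gn)(T)$ and the A-tensor summand $-\tfrac12\pi_\gt[X,Y]$ only feed the $\gt$-component and disappear on pairing with $n_\alpha$, while the base bracket $[X,n_\alpha]_\gk = [X,n_\alpha]_\gh + \pi_\gt[X,n_\alpha]$ reproduces the surviving terms exactly. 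Summing over $\alpha$ then gives $\Ric_{\mathsmaller N}(P_\gn)$.

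For the differential identities I would substitute the specialization of \eqref{diffSsub} into \eqref{diffRmRic}. In the horizontal direction $B = 0\oplus B_\gn$, formula \eqref{diffSsub} shows that $\diff S_{\mathsmaller M}|_{0\oplus P_\gn}(0\oplus B_\gn)$ vanishes except on the $\gn\times\gn\to\gn$ slot, where it equals $\diff S_{\mathsmaller N}|_{P_\gn}(B_\gn)$; hence $\diff\Rm_{\mathsmaller M}|_{0\oplus P_\gn}(0\oplus B_\gn)(T,\,\cdot\,).\tilde T \equiv 0$ (every factor kills $\gt$), making the $\gt$-block vanish, and the same horizontal bookkeeping as above identifies the $\gn$-block with $\diff\Ric_{\mathsmaller N}|_{P_\gn}(B_\gn)$. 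In the vertical direction $B = B_\gt\oplus 0$, \eqref{diffSsub} shows that $\diff S_{\mathsmaller M}|_{0\oplus P_\gn}(B_\gt\oplus 0)$ is supported on the mixed slots $(T,Y)\mapsto\tfrac12 P_\gn^{-1}[B_\gt T,Y]$ and $(X,\tilde T)\mapsto -\tfrac12 P_\gn^{-1}[X,B_\gt\tilde T]$. Computing $\diff\Rm_{\mathsmaller M}|_{0\oplus P_\gn}(B_\gt\oplus 0)(T,Z).\tilde T$ through \eqref{diffRmRic}, only $Z \in \gn$ contributes, and the resulting trace reduces to $\tfrac12(\Tr(UP_\gn^{-1}V) - \Tr(P_\gn^{-1}VU))$ with $U = \ad(T)|_\gn$ and $V = \ad(B_\gt\tilde T)|_\gn$; this vanishes by cyclicity of the trace (the $Q_\gn$-skewness of $U$ fixing the signs), proving \eqref{Ric(0P)2}.

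I expect the main obstacle to be the $\gn$-block of \eqref{Ric(0P)1} and its horizontal differential: one must track carefully which bracket components land in $\gt$ and which in $\gn$, invoking $[\gh,\gt]=0$, the $\Ad(\fK)$-invariance of the decomposition \eqref{eq:dec} and Lemma \ref{lemma:almtriv}, in order to verify that the genuine Riemannian-submersion (O'Neill) corrections are entirely $\gt$-valued and therefore drop out upon projecting onto $\gn$. This is precisely the mechanism by which the collapsed horizontal Ricci curvature becomes that of the base.
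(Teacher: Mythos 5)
Your proposal is correct and follows essentially the same route as the paper's proof: specialize $S_{\mathsmaller M}$ and $\diff S_{\mathsmaller M}$ at $0\oplus P_{\gn}$ via \eqref{Ssub} and \eqref{diffSsub}, compare the resulting operators with $S_{\mathsmaller N}(P_{\gn})$ and $\diff S_{\mathsmaller N}|_{P_{\gn}}(B_{\gn})$, and invoke the Schur's Lemma block-diagonality \eqref{splittingRic} so that only the $\gt$- and $\gn$-blocks need to be checked. The differences are purely presentational: where the paper asserts ``direct computations'' (for instance, that $\diff \Ric_{\mathsmaller M}\!|_{0\oplus P_{\gn}}(B_{\gt}\oplus 0)(T) \in \gn$, or that the vertical terms in the horizontal trace drop out), you carry these out explicitly --- the Jacobi-identity cancellation of $\sum_a Q(\Rm_{\mathsmaller M}(X,\tau_a).Y,\tau_a)$ and the cyclicity-of-trace argument for \eqref{Ric(0P)2} --- which is exactly what those computations amount to.
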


\begin{proof}
Fix $B_{\gn} \in \Sym(\gn,Q_{\gn})^{\Ad(\fK)}$ and let $X,Y,Z \in \gn$, $T,\tilde{T} \in \gt$. Then, from \eqref{Ssub} and \eqref{diffSsub}, it follows that the operators $S_{\mathsmaller M}(0\oplus P_{\gn})$ and $\diff S_{\mathsmaller M}|_{0\oplus P_{\gn}}(0\oplus B_{\gn})$ satisfy
\begin{equation} \begin{aligned} \label{SM(0P)}
&S_{\mathsmaller M}(0\oplus P_{\gn})(T).\tilde{T} = 0 \,\, , \quad S_{\mathsmaller M}(0\oplus P_{\gn})(T).Y = -\ad(T).Y \,\, , \quad S_{\mathsmaller M}(0\oplus P_{\gn})(X).\tilde{T} = 0 \,\, , \\
&S_{\mathsmaller M}(0\oplus P_{\gn})(X).Y = -\tfrac12\pi_{\gm}.\ad(X).Y -\tfrac12P_{\gn}^{-1}.\pi_{\gn}.(\ad(X).P_{\gn}-\ad(P_{\gn}.X)).Y
\end{aligned} \end{equation}
and
\begin{equation} \begin{aligned} \label{diffSM(0P)1}
&\diff S_{\mathsmaller M}|_{0\oplus P_{\gn}}(0\oplus B_{\gn})(T).\tilde{T} = 0 \,\, , \quad \diff S_{\mathsmaller M}|_{0\oplus P_{\gn}}(0\oplus B_{\gn})(T).Y = 0 \,\, , \quad \diff S_{\mathsmaller M}|_{0\oplus P_{\gn}}(0\oplus B_{\gn})(X).\tilde{T} = 0 \,\, , \\
&\diff S_{\mathsmaller M}|_{0\oplus P_{\gn}}(0\oplus B_{\gn})(X).Y = \tfrac12P_{\gn}^{-1}.B_{\gn}.P_{\gn}^{-1}.\pi_{\gn}.(\ad(X).P_{\gn}-\ad(P_{\gn}.X)).Y \\
&\qquad\qquad\qquad\qquad\qquad\qquad -\tfrac12P_{\gn}^{-1}.\pi_{\gn}.(\ad(X).B_{\gn}-\ad(B_{\gn}.X)).Y \,\, .
\end{aligned} \end{equation}
On the other hand, by using \eqref{eq:SM} and \eqref{diffinverse}, it follows that the operators $S_{\mathsmaller N}(P_{\gn})$ and $\diff S_{\mathsmaller N}|_{P_{\gn}}(B_{\gn})$ satisfy
\begin{equation} \begin{aligned} \label{SN(0P)}
&S_{\mathsmaller N}(P_{\gn})(X).Y = -\tfrac12\pi_{\gn}.\ad(X).Y -\tfrac12P_{\gn}^{-1}.\pi_{\gn}.(\ad(X).P_{\gn}-\ad(P_{\gn}.X)).Y \,\, , \\
&\diff S_{\mathsmaller N}|_{P_{\gn}}(B_{\gn})(X).Y = \tfrac12P_{\gn}^{-1}.B_{\gn}.P_{\gn}^{-1}.\pi_{\gn}.(\ad(X).P_{\gn}-\ad(P_{\gn}.X)).Y \\
&\qquad\qquad\qquad\qquad\quad -\tfrac12P_{\gn}^{-1}.\pi_{\gn}.(\ad(X).B_{\gn}-\ad(B_{\gn}.X)).Y \,\, .
\end{aligned} \end{equation}
A straightforward computation based on \eqref{eq:Rm}, \eqref{diffRmRic}, \eqref{SM(0P)} and \eqref{diffSM(0P)1} shows that
$$
\Rm_{\mathsmaller M}(0\oplus P_{\gn})(T,\cdot).\tilde{T} = \diff \Rm_{\mathsmaller M}\!|_{0\oplus P_{\gn}}(0\oplus B_{\gn})(T,\cdot).\tilde{T} = 0
$$
and so, by using \eqref{eq:Ric} and \eqref{diffRmRic}, we get
$$
\Ric_{\mathsmaller M}(0\oplus P_{\gn})(T) \in \gn \quad \text{ and } \quad \diff \Ric_{\mathsmaller M}\!|_{0\oplus P_{\gn}}(0\oplus B_{\gn})(T) \in \gn \,\, .
$$
Therefore, \eqref{splittingRic} implies that
\begin{equation} \label{ext1}
\Ric_{\mathsmaller M}(0\oplus P_{\gn})(T) = \diff \Ric_{\mathsmaller M}\!|_{0\oplus P_{\gn}}(0\oplus B_{\gn})(T) = 0 \,\, .
\end{equation}
Again, using \eqref{eq:Rm}, \eqref{diffRmRic}, \eqref{SM(0P)} and \eqref{diffSM(0P)1} one can directly check that
$$
\Rm_{\mathsmaller M}(0\oplus P_{\gn})(X,\cdot).\tilde{T} = \diff \Rm_{\mathsmaller M}\!|_{0\oplus P_{\gn}}(0\oplus B_{\gn})(X,\cdot).\tilde{T} = 0
$$
and so \eqref{eq:Ric} and \eqref{diffRmRic} imply that
\begin{equation} \label{ext2}
\Ric_{\mathsmaller M}(0\oplus P_{\gn})(X) \in \gn \quad \text{ and } \quad \diff \Ric_{\mathsmaller M}\!|_{0\oplus P_{\gn}}(0\oplus B_{\gn})(X) \in \gn \,\, .
\end{equation}
Finally, another direct computation based on \eqref{eq:Rm}, \eqref{diffRmRic}, \eqref{SM(0P)}, \eqref{diffSM(0P)1} and \eqref{SN(0P)} shows that
\begin{equation} \begin{aligned} \label{ext3}
\pi_{\gn}(\Rm_{\mathsmaller M}(0\oplus P_{\gn})(X,Y).Z) &= \Rm_{\mathsmaller N}(P_{\gn})(X,Y).Z \,\, , \\
\pi_{\gn}(\diff \Rm_{\mathsmaller M}\!|_{0\oplus P_{\gn}}(0\oplus B_{\gn})(X,Y).Z) &= \diff \Rm_{\mathsmaller N}\!|_{P_{\gn}}(B_{\gn})(X,Y).Z \,\, .
\end{aligned} \end{equation}
Notice now that \eqref{Ric(0P)1} follows from \eqref{ext1}, \eqref{ext2} and \eqref{ext3}. In order to prove \eqref{Ric(0P)2}, fix $B_{\gt} \in \Sym(\gt,Q_{\gt})^{\Ad(\fH)}$ and observe that, from \eqref{diffSsub}, it follows that the operator $\diff S_{\mathsmaller M}|_{0\oplus P_{\gn}}(B_{\gt}\oplus 0)$ satisfies
\begin{equation} \begin{aligned} \label{diffSM(0P)2}
\diff S_{\mathsmaller M}|_{0\oplus P_{\gn}}(B_{\gt}\oplus 0)(T).\tilde{T} &= 0 \,\, , \\
\diff S_{\mathsmaller M}|_{0\oplus P_{\gn}}(B_{\gt}\oplus 0)(T).Y &= +\tfrac12P_{\gn}^{-1}.\ad(B_{\gt}.T).Y \,\, , \\
\diff S_{\mathsmaller M}|_{0\oplus P_{\gn}}(B_{\gt}\oplus 0)(X).\tilde{T} &= -\tfrac12P_{\gn}^{-1}.\ad(X).B_{\gt}.\tilde{T} \,\, , \\
\diff S_{\mathsmaller M}|_{0\oplus P_{\gn}}(B_{\gt}\oplus 0)(X).Y &= 0 \,\, .
\end{aligned} \end{equation}
Again, by using \eqref{eq:Rm}, \eqref{eq:Ric}, \eqref{diffRmRic}, \eqref{SM(0P)} and \eqref{diffSM(0P)2}, one can show that
$$
\diff \Ric_{\mathsmaller M}\!|_{0\oplus P_{\gn}}(B_{\gt}\oplus 0)(T) \in \gn
$$
and so, using \eqref{splittingRic}, we get \eqref{Ric(0P)2}.
\end{proof}

\subsection{The $\bar{P}_{\gn}$-projected Ricci tensor} \hfill \par

Fix a unit volume Einstein metric $\bar{P}_{\gn} \in \eM^{\fG}_{{\mathsmaller N},1}$ on $N$, {\it i.e.\ }$\Ric_{\mathsmaller N}(\bar{P}_{\gn}) = \l\,\bar{P}_{\gn}$ for some $\l \in \bR$. Since $N$ is compact, Bochner's Theorem implies that $\l$ is non-negative (see \cite{Boc}). Moreover, since $M=\fG/\fH$ is not a torus, then also $N$ is not a torus and so $\l >0$.

We introduce the Euclidean inner product $\langle\!\langle\cdot,\cdot\rangle\!\rangle^{\mathsmaller{(\!\bar{P}_{\gn}\!)}}$ on the linear space $\Sym(\gm,Q_{\gm})^{\Ad(\fH)}$ defined by
\begin{equation} \label{prod2}
\langle\!\langle B_1, B_2\rangle\!\rangle^{\mathsmaller{(\!\bar{P}_{\gn}\!)}} \= \dim(N)^{-1}\Tr\!\big((\Id_{\gt} \oplus (\bar{P}_{\gn})^{-1}).B_1.(\Id_{\gt} \oplus (\bar{P}_{\gn})^{-1}).B_2\big)
\end{equation}
and the {\it $\bar{P}_{\gn}$-projected Ricci curvature}
\begin{equation} \begin{gathered} \label{def:R}
\cR^{\mathsmaller{(\!\bar{P}_{\gn}\!)}}_{\mathsmaller M}: \widehat{\eM^{\fG}_{\mathsmaller M}(\gk)} \to \Sym(\gt,Q_{\gt})^{\Ad(\fH)} \oplus \Sym(\gn,Q_{\gn})^{\Ad(\fK)} \,\, , \\
\cR^{\mathsmaller{(\!\bar{P}_{\gn}\!)}}_{\mathsmaller M}(P) \= \Ric_{\mathsmaller M}(P) -\frac{\langle\!\langle \Ric_{\mathsmaller M}(P), P\rangle\!\rangle^{\mathsmaller{(\!\bar{P}_{\gn}\!)}}}{\langle\!\langle P, P\rangle\!\rangle^{\mathsmaller{(\!\bar{P}_{\gn}\!)}}}P \,\, .
\end{gathered} \end{equation}
We remark that, for any $P \in \widehat{\eM^{\fG}_{\mathsmaller M}(\gk)}$, the image $\cR^{\mathsmaller{(\!\bar{P}_{\gn}\!)}}_{\mathsmaller M}(P)$ lies in $\Sym(\gt,Q_{\gt})^{\Ad(\fH)} \oplus \Sym(\gn,Q_{\gn})^{\Ad(\fK)}$ by means of \eqref{splittingRic}. As a consequence of Proposition \ref{propRic(0P)}, we get the following

\begin{corollary}
The $\bar{P}_{\gn}$-projected Ricci curvature $\cR^{\mathsmaller{(\!\bar{P}_{\gn}\!)}}_{\mathsmaller M}$ satisfies
\begin{equation} \begin{aligned} \label{diffR1}
\cR^{\mathsmaller{(\!\bar{P}_{\gn}\!)}}_{\mathsmaller M}(0 \oplus \bar{P}_{\gn}) &= 0 \oplus \Ric_{\mathsmaller N}^0(\bar{P}_{\gn}) \,\, , \\
\diff \cR^{\mathsmaller{(\!\bar{P}_{\gn}\!)}}_{\mathsmaller M}\big|_{0 \oplus \bar{P}_{\gn}}(0 \oplus B_{\gn}) &= 0 \oplus \diff{\Ric_{\mathsmaller N}^0}|_{\bar{P}_{\gn}}(B_{\gn})
\end{aligned} \end{equation}
for any $B_{\gn} \in \Sym(\gn,Q_{\gn})^{\Ad(\fK)}$, and
\begin{equation} \begin{aligned} \label{diffR2}
\diff \cR^{\mathsmaller{(\!\bar{P}_{\gn}\!)}}_{\mathsmaller M}\big|_{0 \oplus \bar{P}_{\gn}}(B_{\gt} \oplus 0).T = -\l B_{\gt}.T
\end{aligned} \end{equation}
for any $B_{\gt} \in \Sym(\gt,Q_{\gt})^{\Ad(\fH)}$, $T \in \gt$.
\end{corollary}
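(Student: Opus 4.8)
The plan is to write the projected Ricci curvature as $\cR^{(\bar{P}_{\gn})}_{\mathsmaller M}(P) = \Ric_{\mathsmaller M}(P) - \mu(P)\,P$ with scalar factor
\[
\mu(P) \= \frac{\langle\!\langle \Ric_{\mathsmaller M}(P), P\rangle\!\rangle^{(\bar{P}_{\gn})}}{\langle\!\langle P, P\rangle\!\rangle^{(\bar{P}_{\gn})}} \,\, ,
\]
and to differentiate this product at $0\oplus\bar{P}_{\gn}$ by the Leibniz rule, feeding in the value and first variation of $\Ric_{\mathsmaller M}$ furnished by Proposition \ref{propRic(0P)}. The only ingredient not already contained in Proposition \ref{propRic(0P)} is the behaviour of the scalar $\mu$, and the computation is organized around the block structure of $\langle\!\langle\cdot,\cdot\rangle\!\rangle^{(\bar{P}_{\gn})}$, which pairs two symmetric endomorphisms through their $\gn$-blocks via $\dim(N)^{-1}\Tr\big(\bar{P}_{\gn}^{-1}\,\cdot\,\bar{P}_{\gn}^{-1}\,\cdot\,\big)$ and kills anything paired against the vanishing $\gt$-block of $0\oplus\bar{P}_{\gn}$.

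First I would evaluate $\mu$ at $0\oplus\bar{P}_{\gn}$. By \eqref{Ric(0P)1} and the Einstein condition $\Ric_{\mathsmaller N}(\bar{P}_{\gn}) = \l\bar{P}_{\gn}$ we have $\Ric_{\mathsmaller M}(0\oplus\bar{P}_{\gn}) = 0\oplus\l\bar{P}_{\gn}$, so the block formula gives $\langle\!\langle \Ric_{\mathsmaller M}(0\oplus\bar{P}_{\gn}), 0\oplus\bar{P}_{\gn}\rangle\!\rangle^{(\bar{P}_{\gn})} = \l$ and $\langle\!\langle 0\oplus\bar{P}_{\gn}, 0\oplus\bar{P}_{\gn}\rangle\!\rangle^{(\bar{P}_{\gn})} = 1$, whence $\mu(0\oplus\bar{P}_{\gn}) = \l$. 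Substituting into $\cR^{(\bar{P}_{\gn})}_{\mathsmaller M} = \Ric_{\mathsmaller M} - \mu\,P$ and using $\Ric_{\mathsmaller N}^0(\bar{P}_{\gn})=0$ gives the first line of \eqref{diffR1}.

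The vertical variation \eqref{diffR2} then follows from a cancellation that bypasses $\diff\mu$ entirely. Expanding $\diff\cR^{(\bar{P}_{\gn})}_{\mathsmaller M}\big|_{0\oplus\bar{P}_{\gn}}(B_{\gt}\oplus 0)$ by the Leibniz rule and applying it to $T\in\gt$, the term $\diff\Ric_{\mathsmaller M}|_{0\oplus\bar{P}_{\gn}}(B_{\gt}\oplus 0).T$ vanishes by \eqref{Ric(0P)2}, the term proportional to $(0\oplus\bar{P}_{\gn}).T$ vanishes because $0\oplus\bar{P}_{\gn}$ acts as $0$ on $\gt$, and only $-\mu(0\oplus\bar{P}_{\gn})\,B_{\gt}.T = -\l\,B_{\gt}.T$ survives.

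The horizontal variation, the second line of \eqref{diffR1}, is where the real work sits and is the main obstacle. Differentiating in the direction $0\oplus B_{\gn}$ and using \eqref{Ric(0P)1} together with $\mu(0\oplus\bar{P}_{\gn})=\l$ gives
\[
\diff\cR^{(\bar{P}_{\gn})}_{\mathsmaller M}\big|_{0\oplus\bar{P}_{\gn}}(0\oplus B_{\gn}) = 0 \oplus \Big(\diff\Ric_{\mathsmaller N}|_{\bar{P}_{\gn}}(B_{\gn}) - \big(\diff\mu|_{0\oplus\bar{P}_{\gn}}(0\oplus B_{\gn})\big)\bar{P}_{\gn} - \l\,B_{\gn}\Big) \,\, ,
\]
whereas differentiating $\Ric_{\mathsmaller N}^0 = \Ric_{\mathsmaller N} - \dim(N)^{-1}\scal_{\mathsmaller N}\cdot P$ and using $\scal_{\mathsmaller N}(\bar{P}_{\gn}) = \l\dim(N)$ produces the same right-hand side with $\diff\mu|_{0\oplus\bar{P}_{\gn}}(0\oplus B_{\gn})$ replaced by $\dim(N)^{-1}\diff\scal_{\mathsmaller N}|_{\bar{P}_{\gn}}(B_{\gn})$. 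Hence the statement reduces to the single scalar identity $\diff\mu|_{0\oplus\bar{P}_{\gn}}(0\oplus B_{\gn}) = \dim(N)^{-1}\diff\scal_{\mathsmaller N}|_{\bar{P}_{\gn}}(B_{\gn})$. I would verify this directly: at $0\oplus\bar{P}_{\gn}$ the denominator of $\mu$ equals $1$, so the quotient rule collapses $\diff\mu$ into the variation of the numerator minus $\l$ times the variation of the denominator; unwinding this and, separately, $\dim(N)^{-1}\diff\scal_{\mathsmaller N}$ via $\scal_{\mathsmaller N} = \Tr(P^{-1}\Ric_{\mathsmaller N})$ and $\diff(P^{-1}) = -P^{-1}(\diff P)P^{-1}$, both sides collapse to the common combination
\[
\dim(N)^{-1}\Big(\Tr\big(\bar{P}_{\gn}^{-1}\diff\Ric_{\mathsmaller N}|_{\bar{P}_{\gn}}(B_{\gn})\big) - \l\,\Tr\big(\bar{P}_{\gn}^{-1}B_{\gn}\big)\Big) \,\, .
\]
The difficulty here is not conceptual but bookkeeping: making the normalization hidden in $\langle\!\langle\cdot,\cdot\rangle\!\rangle^{(\bar{P}_{\gn})}$ match the trace-normalization built into $\Ric_{\mathsmaller N}^0$.
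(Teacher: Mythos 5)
Your proposal is correct and follows essentially the same route as the paper: the value $\mu(0\oplus\bar{P}_{\gn})=\l$ and the Leibniz-rule cancellation for the vertical direction \eqref{diffR2} reproduce the paper's argument verbatim, while Proposition \ref{propRic(0P)} supplies the $\Ric_{\mathsmaller M}$ terms exactly as in the paper. The only difference is one of exposition: where the paper dismisses \eqref{diffR1} as ``a direct computation,'' you carry it out, correctly reducing the horizontal variation to the scalar identity $\diff\mu|_{0\oplus\bar{P}_{\gn}}(0\oplus B_{\gn})=\dim(N)^{-1}\diff\scal_{\mathsmaller N}|_{\bar{P}_{\gn}}(B_{\gn})$ and verifying that both sides equal $\dim(N)^{-1}\big(\Tr(\bar{P}_{\gn}^{-1}\diff\Ric_{\mathsmaller N}|_{\bar{P}_{\gn}}(B_{\gn}))-\l\Tr(\bar{P}_{\gn}^{-1}B_{\gn})\big)$, which is a welcome addition rather than a deviation.
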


\begin{proof}
Notice that \eqref{diffR1} follows from a direct computation based on \eqref{eq:SM}, \eqref{eq:Rm}, \eqref{eq:Ric} and \eqref{Ric(0P)1}. Moreover, from \eqref{Ric(0P)1} and \eqref{Ric(0P)2}, we get
\begin{align*}
\diff \cR^{\mathsmaller{(\!\bar{P}_{\gn}\!)}}_{\mathsmaller M}|_{0 \oplus \bar{P}_{\gn}}(B_{\gt} \oplus 0).T &= -\diff\bigg(\frac{\langle\!\langle \Ric_{\mathsmaller M}(P), P\rangle\!\rangle^{\mathsmaller{(\!\bar{P}_{\gn}\!)}}}{\langle\!\langle P, P\rangle\!\rangle^{\mathsmaller{(\!\bar{P}_{\gn}\!)}}}P\bigg)\bigg|_{0 \oplus \bar{P}_{\gn}}(B_{\gt} \oplus 0).T \\
&= -\diff \bigg(\frac{\langle\!\langle \Ric_{\mathsmaller M}(P), P\rangle\!\rangle^{\mathsmaller{(\!\bar{P}_{\gn}\!)}}}{\langle\!\langle P, P\rangle\!\rangle^{\mathsmaller{(\!\bar{P}_{\gn}\!)}}}\bigg)\bigg|_{0 \oplus \bar{P}_{\gn}}(B_{\gt} \oplus 0) \cdot (0 \oplus \bar{P}_{\gn}).T \\
&\qquad -\bigg(\frac{\langle\!\langle \Ric_{\mathsmaller M}(0\oplus \bar{P}_{\gn}), 0\oplus \bar{P}_{\gn}\rangle\!\rangle^{\mathsmaller{(\!\bar{P}_{\gn}\!)}}}{\langle\!\langle 0\oplus \bar{P}_{\gn}, 0\oplus \bar{P}_{\gn}\rangle\!\rangle^{\mathsmaller{(\!\bar{P}_{\gn}\!)}}}\bigg) \cdot (B_{\gt} \oplus 0).T \\
&= 0 -\frac{\scal_{\mathsmaller N}(\bar{P}_{\gn})}{\dim(N)} B_{\gt}.T \\
&= -\l B_{\gt}.T
\end{align*}
for any $B_{\gt} \in \Sym(\gt,Q_{\gt})^{\Ad(\fH)}$ and $T \in \gt$, which proves \eqref{diffR2}.
\end{proof}

In virtue of Proposition \ref{smoothextension} and \eqref{propRic(0P)}, the Ricci flow preserves the subspace $\eM^{\fG}_{\mathsmaller M}(\gk)$ of $\gk$-submersion metrics and can be extended to the larger space $\widehat{\eM^{\fG}_{\mathsmaller M}(\gk)}$ of generalized $\gk$-submersion metrics. Moreover, since the Ricci curvature $\Ric_{\mathsmaller M}$ is scale invariant, we may project the Ricci flow onto the unit sphere
$$
\S^{\mathsmaller{(\!\bar{P}_{\gn}\!)}} \= \Big\{P \in \widehat{\eM^{\fG}_{\mathsmaller M}(\gk)} : \langle\!\langle P, P\,\rangle\!\rangle^{\mathsmaller{(\!\bar{P}_{\gn}\!)}}= 1 \Big\}
$$
of $\widehat{\eM^{\fG}_{\mathsmaller M}(\gk)}$ with respect to the inner product $\langle\!\langle\cdot,\cdot\rangle\!\rangle^{\mathsmaller{(\!\bar{P}_{\gn}\!)}}$. Hence up to rescaling, the Ricci flow is equivalent to the flow on $
\S^{\mathsmaller{(\!\bar{P}_{\gn}\!)}}$ defined by
\begin{equation} \label{projRF}
P'(t) = -2\cR^{\mathsmaller{(\!\bar{P}_{\gn}\!)}}_{\mathsmaller M}(P(t)) \,\, ,
\end{equation}
which we call the {\it $\bar{P}_{\gn}$-projected Ricci flow}.

\medskip
\section{Proof of Theorem \ref{main-A}} \label{sect:mainproof}
\setcounter{equation} 0

This section is devoted to the proof of our main result. In the following, we consider a compact homogeneous space $M=\fG/\fH$, a fixed maximal toral $\fH$-subalgebra $\gk$ of $\gg$ and we use the same notation as in Section \ref{sect:prel} and Section \ref{sect:projRF}.

\subsection{Two preparatory results} \hfill \par

Take a sequence $(P^{(n)}) \subset \eM^{\fG}_{\mathsmaller M}(\gk)$ of $\gk$-submersion metrics $P^{(n)} = P^{(n)}_{\gt} \oplus P^{(n)}_{\gn}$ such that $P^{(n)}_{\gt} \to 0$ and $P^{(n)}_{\gn} \to P^{(\infty)}_{\gn} \in \eM^{\fG}_{\mathsmaller N}$ as $n \to +\infty$. The first result that we need for proving Theorem \ref{main-A} is the following.

\begin{prop}
The scalar curvature of $P^{(n)}$ converges to the scalar curvature of $P^{(\infty)}_{\gn}$, that is
\begin{equation} \label{limitscal}
\scal_{\mathsmaller M}(P^{(n)}) \to \scal_{\mathsmaller N}(P^{(\infty)}_{\gn}) \quad \text{ as $n \to +\infty$ .}
\end{equation}
\end{prop}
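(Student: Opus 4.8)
The plan is to reduce the statement to the block structure of the extended Ricci curvature. By \eqref{splittingRic} each endomorphism $\Ric_{\mathsmaller M}(P^{(n)})$ is block-diagonal with respect to $\gm=\gt\oplus\gn$; write $\Ric_{\mathsmaller M}(P^{(n)})=\fR^{(n)}_{\gt}\oplus\fR^{(n)}_{\gn}$. Since $P^{(n)}=P^{(n)}_{\gt}\oplus P^{(n)}_{\gn}$ is block-diagonal as well, formula \eqref{eq:scal} splits as
$$
\scal_{\mathsmaller M}(P^{(n)})=\Tr\big((P^{(n)}_{\gt})^{-1}.\fR^{(n)}_{\gt}\big)+\Tr\big((P^{(n)}_{\gn})^{-1}.\fR^{(n)}_{\gn}\big)\,\, .
$$
I would then prove separately that the horizontal trace converges to $\scal_{\mathsmaller N}(P^{(\infty)}_{\gn})$ and that the vertical trace tends to $0$; summing the two gives \eqref{limitscal}.

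The horizontal trace is the elementary half. As $P^{(n)}_{\gt}\to 0$ and $P^{(n)}_{\gn}\to P^{(\infty)}_{\gn}$, the metrics $P^{(n)}$ converge in $\widehat{\eM^{\fG}_{\mathsmaller M}(\gk)}$ to $0\oplus P^{(\infty)}_{\gn}$. By the analytic, hence continuous, dependence of the extended Ricci curvature furnished by Proposition~\ref{smoothextension}, together with the boundary value in \eqref{Ric(0P)1}, one gets $\fR^{(n)}_{\gn}\to\Ric_{\mathsmaller N}(P^{(\infty)}_{\gn})$. Since $P^{(\infty)}_{\gn}$ is positive-definite, also $(P^{(n)}_{\gn})^{-1}\to(P^{(\infty)}_{\gn})^{-1}$, and therefore the horizontal trace converges to $\Tr\big((P^{(\infty)}_{\gn})^{-1}.\Ric_{\mathsmaller N}(P^{(\infty)}_{\gn})\big)=\scal_{\mathsmaller N}(P^{(\infty)}_{\gn})$.

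The vertical trace is the genuine obstacle, since $(P^{(n)}_{\gt})^{-1}$ blows up as $P^{(n)}_{\gt}\to 0$. Knowing merely that $\fR^{(n)}_{\gt}\to 0$ from \eqref{Ric(0P)1}, or even its first-order vanishing \eqref{Ric(0P)2}, is not enough: the eigenvalues of $P^{(n)}_{\gt}$ may collapse at different rates, so an abstract order-of-vanishing estimate for $\fR^{(n)}_{\gt}$ does not control $\Tr\big((P^{(n)}_{\gt})^{-1}.\fR^{(n)}_{\gt}\big)$. What is needed is the exact dependence of the vertical Ricci on $P_{\gt}$. The conceptual reason this works is O'Neill's formula: any $\gk$-submersion metric turns $\fT\to M\to N$ into a Riemannian submersion with totally geodesic fibers, and the fibers are flat tori because $[\gk,\gk]\subset\gh$ forces $\gt$ to be abelian. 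Hence
$$
\scal_{\mathsmaller M}(P^{(n)})=\scal_{\mathsmaller N}(P^{(n)}_{\gn})+\scal_{\mathsmaller{\fT}}-|A^{(n)}|^2=\scal_{\mathsmaller N}(P^{(n)}_{\gn})-|A^{(n)}|^2\,\, ,
$$
where $A$ is the integrability tensor of the horizontal distribution. The key feature is that, in a $P^{(n)}_{\gn}$-orthonormal horizontal frame $\{\tilde X_{\a}\}$, one has $|A^{(n)}|^2=\tfrac14\sum_{\a,\b}Q\big(P^{(n)}_{\gt}.[\tilde X_{\a},\tilde X_{\b}]_{\gt},[\tilde X_{\a},\tilde X_{\b}]_{\gt}\big)$, which is linear in $P^{(n)}_{\gt}$ with coefficients that stay bounded as $P^{(n)}_{\gn}\to P^{(\infty)}_{\gn}$; consequently $|A^{(n)}|^2\to 0$. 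This is exactly the statement that $Q(\Ric_{\mathsmaller M}(P).T,\tilde T)$ is quadratic in $P_{\gt}$ (one factor in each slot), so that $\Tr(P_{\gt}^{-1}.\Ric_{\mathsmaller M}(P)|_{\gt})$ is linear in $P_{\gt}$.

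In the write-up I would take the submersion identity as the conceptual backbone but verify the crucial scaling directly from the explicit formulas \eqref{Ssub}: feeding \eqref{Ssub} into \eqref{eq:Rm} and \eqref{eq:Ric} and computing $Q(\Ric_{\mathsmaller M}(P).T,\tilde T)$ for $T,\tilde T\in\gt$, one checks that all surviving contributions factor through an explicit $P_{\gt}$ on each side, the would-be terms linear in $P_{\gt}$ cancelling against one another. Tracking this cancellation — i.e.\ pinning down the precise order and form of vanishing of the vertical Ricci, rather than settling for a qualitative estimate — is the one delicate point of the argument; once it is in place, both the horizontal and vertical limits follow and \eqref{limitscal} is proved.
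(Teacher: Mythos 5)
Your proposal is correct and takes essentially the same route as the paper: both rest on O'Neill's formula $\scal_{\mathsmaller M}(P^{(n)}) = \scal_{\mathsmaller N}(P^{(n)}_{\gn}) - \big|A^{(n)}\big|^2$ for the totally geodesic, flat torus fibers, on continuity of $\scal_{\mathsmaller N}$, and on the observation that $\big|A^{(n)}\big|^2$ is linear in $P^{(n)}_{\gt}$ with coefficients bounded as $P^{(n)}_{\gn} \to P^{(\infty)}_{\gn}$, hence tends to zero. The only real difference is bookkeeping: the paper diagonalizes $P^{(n)}$ and estimates \emph{all} components of the integrability tensor via the structure constants $[ijk]^{(n)}$ (including the horizontal--vertical ones, which require the relations forced by the submersion structure and the abelian fibers), whereas you record only the horizontal--horizontal part, which is what actually enters Besse's Eq.\ (9.37); your initial block-trace decomposition of $\scal_{\mathsmaller M}$ and the verified cancellation of the terms linear in $P_{\gt}$ in the vertical Ricci are sound but not needed once the O'Neill identity is in place.
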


\begin{proof}
Since the fibers of \eqref{torusfib} are totally geodesic and flat along the sequence, by O'Neill's Formula (see \cite[Eq.\ (9.37)]{Bes}) we get
$$
\scal_{\mathsmaller M}(P^{(n)}) = \scal_{\mathsmaller N}(P^{(n)}_{\gn}) - \big(\big|A^{(n)}\big|_{\mathsmaller{P^{(n)}}}\big)^2 \,\, ,
$$
where $A^{(n)}: \gm \otimes \gm \to \gm$ is the {\it O'Neill's integrability tensor} for the Riemannian submersion induced by \eqref{torusfib} and the metric $P^{(n)}$.

Since the scalar curvature functional is continuous, it follows that $\scal_{\mathsmaller N}(P^{(n)}_{\gn}) \to \scal_{\mathsmaller N}(P^{(\infty)}_{\gn})$. Therefore, in order to prove \eqref{limitscal}, it is sufficient to show that $\big|A^{(n)}\big|_{\mathsmaller{P^{(n)}}} \to 0$ as $n \to +\infty$.

For any $n \in \bN$, we consider a $Q_{\gm}$-orthogonal, $\Ad(\fH)$-invariant decomposition into irreducible modules
\begin{equation} \label{decm}
\gm = \gm_1^{(n)} + {\dots} + \gm_{\ell}^{(n)}
\end{equation}
with respect to which $P^{(n)}$ is diagonal, {\it i.e.\ }
$$
P^{(n)} = x_1^{(n)} \Id_{\gm_1^{(n)}} \oplus {\dots} \oplus x_{\ell}^{(n)} \Id_{\gm_{\ell}^{(n)}} \,\, , \quad x_k^{(n)}>0 \,\, \text{ for any $1 \leq k \leq \ell$} \,\, .
$$
By hypothesis, we can assume that: \begin{itemize}
\item[$\bcdot$] the dimension $m_i \= \dim(\gm_i^{(n)})$ is constant along the sequence for any $1 \leq i \leq \ell$;
\item[$\bcdot$] the decomposition \eqref{decm} converges to a well defined $\Ad(\fH)$-invariant, irreducible, limit decomposition $\gm = \gm_1^{(\infty)} + {\dots} + \gm_{\ell}^{(\infty)}$;
\item[$\bcdot$] there exists $1 \leq r \leq \ell$ such that $$
\gt = \gm_1^{(n)} + {\dots} + \gm_{r}^{(n)} \,\, , \quad \gn = \gm_{r+1}^{(n)} + {\dots} + \gm_{\ell}^{(n)} \quad \text{ for any $n \in \bN$ } ;
$$
\item[$\bcdot$] $P^{(\infty)}_{\gn}$ is diagonal with respect to $\gn = \gm_{r+1}^{(\infty)} + {\dots} + \gm_{\ell}^{(\infty)}$, {\it i.e.\ }
$$
P^{(\infty)}_{\gn} = x_{r+1}^{(\infty)} \Id_{\gm_{r+1}^{(\infty)}} \oplus {\dots} \oplus x_{\ell}^{(\infty)} \Id_{\gm_{\ell}^{(\infty)}} \,\, , \quad x_j^{(\infty)}>0 \,\, \text{ for any $r+1 \leq j \leq \ell$} \,\, .
$$
\end{itemize}
Notice that, by hypothesis, it follows that $x_i^{(n)} \to 0$ as $n \to +\infty$ for any $1 \leq i \leq r$ and $x_j^{(n)} \to x_j^{(\infty)}$ as $n \to +\infty$ for any $r+1 \leq j \leq \ell$. We consider now a sequence of {\it adapted bases}, {\it i.e.\ }for any $n \in \bN$ we consider a $Q_{\gm}$-orthonormal basis $(e_{\a}^{(n)})_{1 \leq \a \leq m}$ for $\gm$ such that
$$
e_1^{(n)} , {\dots} , e_{m_1}^{(n)} \in \gm_1^{(n)} \,\, , \quad
e_{m_1+1}^{(n)} , {\dots} , e_{m_1+m_2}^{(n)} \in \gm_2^{(n)} \,\, , \quad 
{\dots} \quad , \quad
e_{m_1+{\dots}+m_{\ell-1}+1}^{(n)} , {\dots} , e_m^{(n)} \in \gm_{\ell}^{(n)} \,\, ,
$$
and we define the coefficients
\begin{equation} \label{def:[ijk]}
[ijk]^{(n)} \= \sum_{e_{\a}^{(n)} \in \gm_i^{(n)}} \sum_{e_{\b}^{(n)} \in \gm_j^{(n)}} \sum_{e_{\g}^{(n)} \in \gm_k^{(n)}} Q\big([e_{\a}^{(n)},e_{\b}^{(n)}],e_{\g}^{(n)}\big)^2 \,\, .
\end{equation}
Notice that $[ijk]^{(n)}$ is symmetric in all its entries and does not depend on the choice of $(e_{\a}^{(n)})$. Moreover, we can assume that $(e_{\a}^{(n)})$ converges to a limit adapted basis $(e_{\a}^{(\infty)})$ for $\gm$ and, as a consequence, $[ijk]^{(n)}$ converges to the coefficient $[ijk]^{(\infty)}$ related to the limit decomposition. For more information about the diagonalization of invariant metrics on compact homogeneous spaces, we refer to \cite{WaZ, Boe1}.

For the sake of shortness, we set
$$
A_{ij}^{(n)} \= \sum_{e_{\a}^{(n)} \in \gm_i^{(n)}} \sum_{e_{\b}^{(n)}\in \gm_j^{(n)}} \big(\big|A^{(n)}\big(e_{\a}^{(n)},e_{\b}^{(n)}\big)\big|_{\mathsmaller{P^{(n)}}}\big)^2 \,\, .
$$
Notice that by \cite[Lemma 2]{ON} and \eqref{def:[ijk]}, it follows that
\begin{equation} \label{dim-a1}
A_{j_1j_2}^{(n)} = \tfrac14 \sum_{1 \leq i \leq r} [ij_1j_2]^{(n)}x_i^{(n)} \to 0 \quad \text{ for any $r+1 \leq j_1, j_2 \leq \ell$ .}
\end{equation}
Moreover, since O'Neill's tensor is horizontal (see \cite[p.\ 460]{ON}), it follows that
\begin{equation} \label{dim-a2}
A_{ik}^{(n)} =0 \quad \text{ for any $1 \leq i \leq r$ , $1 \leq k \leq \ell$ .}
\end{equation}
Finally, by \cite[Cor.\ 1]{ON} and \cite[Eq.\ (4.5) and (4.7)]{Ped1}, we obtain
\begin{equation} \label{dim-a31}
A_{ji}^{(n)} = \tfrac14\sum_{1 \leq k \leq \ell}[ijk]^{(n)}\tfrac{x_i^{(n)}}{x_j^{(n)}x_k^{(n)}} +\tfrac14\sum_{1 \leq k \leq \ell}[ijk]^{(n)}\Big(\tfrac{x_j^{(n)}}{x_k^{(n)}}-1\Big)\Big(-2\tfrac{x_i^{(n)}}{x_j^{(n)}}+1+3\tfrac{x_k^{(n)}}{x_j^{(n)}}\Big)\tfrac1{x_i^{(n)}}
\end{equation}
for any $1 \leq i \leq r$, $r+1 \leq j \leq \ell$. Since each $P^{(n)}$ is a $\gk$-submersion metric and $\gt$ is abelian, it follows that
\begin{equation} \begin{gathered} \label{dim-a32}
\, [i_1i_2k]^{(n)} = 0 \quad \text{ for any } \, 1 \leq i_1, i_2 \leq r \, , \,\, 1 \leq k \leq \ell \, , \quad \text{ for any } n \in \bN \,\, , \\
\, [ij_1j_2]^{(n)}\Big(\tfrac{x_{j_2}^{(n)}}{x_{j_1}^{(n)}}-1\Big) = 0 \quad \text{ for any } \, 1 \leq i \leq r \, , \,\, r+1 \leq j_1, j_2 \leq r \, , \quad \text{ for any } n \in \bN \,\, .
\end{gathered} \end{equation}
Therefore, by \eqref{dim-a31} and \eqref{dim-a32} we get
\begin{equation} \label{dim-a3}
A_{ji}^{(n)} = \tfrac14\sum_{r+1 \leq j' \leq \ell}[ijj']^{(n)}\tfrac{x_i^{(n)}}{x_j^{(n)}x_{j'}^{(n)}} \to 0 \quad \text{ for any $1 \leq i \leq r$ , $r+1 \leq j \leq \ell$ }
\end{equation}
and so the claim follows from \eqref{dim-a1}, \eqref{dim-a2} and \eqref{dim-a3}.
\end{proof}

Let us denote now by $\td^{(n)}_{\mathsmaller M}$ the Riemannian distance induced by $P^{(n)}$ on $M$ and by $\td^{(n)}_{\mathsmaller N}$ (resp.\ $\td^{(\infty)}_{\mathsmaller N}$) the Riemannian distance induced by $P^{(n)}_{\gn}$ (resp.\ $P^{(\infty)}_{\gn}$) on $N$. We recall that, since $N$ is compact and $P^{(n)}_{\gn} \to P^{(\infty)}_{\gn}$ in the $\cC^{\infty}$-topology, it follows that the metric spaces $(N,\td^{(n)}_{\mathsmaller N})$ converge to $(N,\td^{(\infty)}_{\mathsmaller N})$ in the {\it Gromov-Hausdorff topology} as $n \to +\infty$ (see {\it e.g.\ }\cite[p.\ 415]{Pet}). For a detailed treatment on Gromov-Hausdorff convergence, we refer to \cite{BBI,Ro}.

\begin{prop} \label{propconvGH}
The sequence of compact metric spaces $(M,\td^{(n)}_{\mathsmaller M})$ converges to $(N,\td^{(\infty)}_{\mathsmaller N})$ in the Gromov-Hausdorff topology as $n \to +\infty$.
\end{prop}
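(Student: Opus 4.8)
The plan is to use the fact that, for each fixed $n$, the metric $P^{(n)} = P^{(n)}_{\gt}\oplus P^{(n)}_{\gn}$ turns the torus fibration \eqref{torusfib} into a Riemannian submersion $\pi\colon M\to N$ with totally geodesic fibers, and to combine this with the Gromov-Hausdorff convergence $(N,\td^{(n)}_{\mathsmaller N})\to(N,\td^{(\infty)}_{\mathsmaller N})$ already recorded above. Concretely, I would first show that $(M,\td^{(n)}_{\mathsmaller M})$ is Gromov-Hausdorff close to $(N,\td^{(n)}_{\mathsmaller N})$ with an error controlled by the diameter of the collapsing fibers, and then conclude by the triangle inequality for $d_{\mathrm{GH}}$.

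The core is a two-sided distance estimate for $\pi$. Since $P^{(n)}$ makes $\pi$ a Riemannian submersion, $\diff\pi$ is an isometry on horizontal vectors and vanishes on vertical ones; hence $\pi$ is $1$-Lipschitz and $\td^{(n)}_{\mathsmaller N}(\pi(p),\pi(q))\le\td^{(n)}_{\mathsmaller M}(p,q)$ for all $p,q\in M$. For the opposite bound, using that $M$ is compact so that horizontal lifts of paths exist on the whole interval, I would lift a minimizing geodesic of $(N,\td^{(n)}_{\mathsmaller N})$ from $\pi(p)$ to $\pi(q)$ horizontally starting at $p$; as horizontal lifts preserve length, its endpoint $p'$ lies in the fiber over $\pi(q)$ and satisfies $\td^{(n)}_{\mathsmaller M}(p,p')\le\td^{(n)}_{\mathsmaller N}(\pi(p),\pi(q))$. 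Joining $p'$ to $q$ inside that fiber then gives $\td^{(n)}_{\mathsmaller M}(p,q)\le\td^{(n)}_{\mathsmaller N}(\pi(p),\pi(q))+\e_n$, where $\e_n\=\operatorname{diam}(\fT,P^{(n)}_{\gt})$ is the common (by $\fG$-homogeneity all fibers are isometric) diameter of the fibers. Thus $\pi$ is a surjective map of distortion at most $\e_n$, so $d_{\mathrm{GH}}\big((M,\td^{(n)}_{\mathsmaller M}),(N,\td^{(n)}_{\mathsmaller N})\big)\le 2\e_n$.

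It then remains to check that $\e_n\to0$. The fiber over $e\fK$ is the torus $\fT=\fK/\fH$ endowed with the invariant metric determined by $P^{(n)}_{\gt}$ on $\gt$. Since by hypothesis $P^{(n)}_{\gt}\to0$ as positive-definite endomorphisms of $(\gt,Q_{\gt})$, setting $c_n\=\lambda_{\max}(P^{(n)}_{\gt})\to0$ we have $P^{(n)}_{\gt}\le c_nQ_{\gt}$ as quadratic forms, so every curve in $\fT$ is at most $\sqrt{c_n}$ times as long measured by $P^{(n)}_{\gt}$ as by $Q_{\gt}$; hence $\e_n\le\sqrt{c_n}\operatorname{diam}(\fT,Q_{\gt})\to0$. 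Combining the two estimates above with the triangle inequality
$$
d_{\mathrm{GH}}\big((M,\td^{(n)}_{\mathsmaller M}),(N,\td^{(\infty)}_{\mathsmaller N})\big)\le d_{\mathrm{GH}}\big((M,\td^{(n)}_{\mathsmaller M}),(N,\td^{(n)}_{\mathsmaller N})\big)+d_{\mathrm{GH}}\big((N,\td^{(n)}_{\mathsmaller N}),(N,\td^{(\infty)}_{\mathsmaller N})\big)
$$
shows that both terms on the right tend to $0$, which yields the claim. The step requiring the most care is the upper distance bound, namely the global horizontal lifting of base geodesics together with the control of the residual displacement within the collapsing fibers; the $1$-Lipschitz lower bound and the algebraic estimate $\e_n\to0$ are then routine.
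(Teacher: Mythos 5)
Your proof is correct and follows essentially the same route as the paper's: both hinge on horizontally lifting minimizing base geodesics (which preserves length because $P^{(n)}$ is a $\gk$-submersion metric) and absorbing the residual displacement into the collapsing fiber diameter $\operatorname{diam}(\fT,P^{(n)}_{\gt})\to 0$, with the $\cC^{\infty}$-convergence $P^{(n)}_{\gn}\to P^{(\infty)}_{\gn}$ handling the base. The only difference is packaging: you bound $d_{\mathrm{GH}}$ via the distortion of the surjective projection plus the Gromov--Hausdorff triangle inequality, whereas the paper writes the same ingredients directly as a uniform estimate on $\big|\td^{(n)}_{\mathsmaller M}(a_0\fH,a_1\fH)-\td^{(\infty)}_{\mathsmaller N}(a_0\fK,a_1\fK)\big|$ using the reverse triangle inequality.
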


\begin{proof}
In order to prove the statement, it is sufficient to show that
$$
\big|\td^{(n)}_{\mathsmaller M}(a_0\fH,a_1\fH) - \td^{(\infty)}_{\mathsmaller N}(a_0\fK,a_1\fK)\big| \xrightarrow{n \to +\infty} 0 \quad \text{ uniformly in $a_0,a_1 \in \fG$ .}
$$

Fix $a_0,a_1 \in \fG$ and consider for any $n \in \bN$ a $\td^{(n)}_{\mathsmaller N}$-geodesic $\g^{(n)}: [0,1] \to N$ such that $\g^{(n)}(0)=a_0\fK$, $\g^{(n)}(1)=a_1\fK$, which realizes the $\td^{(n)}_{\mathsmaller N}$-distance between $a_0\fK$ and $a_1\fK$. Consider now the horizontal lift $\g^{(n)\uparrow}: [0,1] \to M$ of $\g^{(n)}$ to $M$ starting from $a_0\fH$ and pick $c^{(n)} \in \fT$ such that $\g^{(n)\uparrow}(1) = a_1c^{(n)}\fH$. Since $P^{(n)}$ is a $\gk$-submersion metric, it follows that $\td^{(n)}_{\mathsmaller M}(a_0\fH,a_1c^{(n)}\fH) = \td^{(n)}_{\mathsmaller N}(a_0\fK,a_1\fK)$. Then, by the reverse triangle inequality, we get
\begin{equation} \label{proofGH}
\big|\td^{(n)}_{\mathsmaller M}(a_0\fH,a_1\fH) - \td^{(\infty)}_{\mathsmaller N}(a_0\fK,a_1\fK)\big| \leq \td^{(n)}_{\mathsmaller M}(a_1\fH,a_1c^{(n)}\fH) +\big|\td^{(n)}_{\mathsmaller N}(a_0\fK,a_1\fK) - \td^{(\infty)}_{\mathsmaller N}(a_0\fK,a_1\fK)\big| \,\, .
\end{equation}
Notice now that both the terms on the right hand side of \eqref{proofGH} converge uniformly to $0$ as $n \to +\infty$, and this concludes the proof.
\end{proof}

Let us finally remark that both \eqref{limitscal} and Proposition \ref{propconvGH} hold true for any (not necessarily maximal) toral $\fH$-subalgebra $\gk$.

\subsection{The existence theorem} \label{subsect:existence} \hfill \par

Consider again a unit volume Einstein metric $\bar{P}_{\gn} \in \eM^{\fG}_{{\mathsmaller N},1}$ on $N$ with $\Ric_{\mathsmaller N}(\bar{P}_{\gn}) = \l\,\bar{P}_{\gn}$ for some $\l>0$. We also set
$$
\nu \= \dim\!\big(\Sym(\gt,Q_{\gt})^{\Ad(\fH)}\big) \,\, .
$$
Notice that, if $\fH$ is connected, then $\Ad(\fH)|_{\gt}$ is trivial and so $\nu = \tfrac{d(d+1)}2$, where $d \= \dim(\fT)$. However, in the general case it may happen that $1 \leq \nu < \tfrac{d(d+1)}2$. \smallskip

The main result of this section is the following

\begin{theorem} \label{thm:existence}
If $\bar{P}_{\gn}$ has coindex $q$, then there exists a $(\nu+q-1)$-parameter family of ancient solutions to the $\bar{P}_{\gn}$-projected Ricci flow on $\eM^{\fG}_{\mathsmaller M}(\gk)$ which converge to $0 \oplus \bar{P}_{\gn}$ as $t \to -\infty$ and such that the corresponding solutions to the Ricci flow are ancient and collapsed.
\end{theorem}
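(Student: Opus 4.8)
The plan is to read the $\bar{P}_{\gn}$-projected Ricci flow \eqref{projRF} as an autonomous dynamical system on the sphere $\S^{\mathsmaller{(\!\bar{P}_{\gn}\!)}}$ and to extract the desired solutions from the unstable manifold of a distinguished rest point. First I would check that $p_0 \= 0 \oplus \bar{P}_{\gn}$ is such a rest point: it lies on $\S^{\mathsmaller{(\!\bar{P}_{\gn}\!)}}$ because $\langle\!\langle p_0,p_0\rangle\!\rangle^{\mathsmaller{(\!\bar{P}_{\gn}\!)}} = \dim(N)^{-1}\Tr(\Id_{\gn}) = 1$, and $\cR^{\mathsmaller{(\!\bar{P}_{\gn}\!)}}_{\mathsmaller M}(p_0) = 0\oplus\Ric^0_{\mathsmaller N}(\bar{P}_{\gn}) = 0$ by \eqref{diffR1} and the Einstein condition. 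Linearizing the vector field $-2\cR^{\mathsmaller{(\!\bar{P}_{\gn}\!)}}_{\mathsmaller M}$ at $p_0$ and using that $T_{p_0}\S^{\mathsmaller{(\!\bar{P}_{\gn}\!)}} = \Sym(\gt,Q_{\gt})^{\Ad(\fH)} \oplus T_{\bar{P}_{\gn}}\eM^{\fG}_{{\mathsmaller N},1}$ (a quick computation, since $\langle\!\langle\cdot,\cdot\rangle\!\rangle^{\mathsmaller{(\!\bar{P}_{\gn}\!)}}$ is block-diagonal with respect to $\gt\oplus\gn$), formulas \eqref{diffR1}--\eqref{diffR2} show that the linearization is block lower-triangular, with vertical block $+2\l\,\Id$ on $\Sym(\gt,Q_{\gt})^{\Ad(\fH)}$ and horizontal block $-2\,\diff\big(\Ric^0_{\mathsmaller N}|_{\eM^{\fG}_{{\mathsmaller N},1}}\big)\big|_{\bar{P}_{\gn}}$. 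As $\l>0$, the vertical block supplies $\nu$ positive eigenvalues, while by Definition \ref{def:coindex} the horizontal block supplies exactly $q$; hence $p_0$ has an unstable space of dimension $\nu+q$.

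I would then apply the unstable manifold theorem to obtain a locally invariant unstable manifold $\cW^u \subset \S^{\mathsmaller{(\!\bar{P}_{\gn}\!)}}$ through $p_0$ of dimension $\nu+q$, each orbit $\sigma(t)$ of which converges to $p_0$ as $t\to-\infty$. Since $\bar{P}_{\gn}$ may be a degenerate critical point (the horizontal block may have a kernel coming from the nullity of $\bar{P}_{\gn}$), the rest point need not be hyperbolic, and I would appeal to the version of the theorem that tolerates a center direction and still produces an unstable manifold whose dimension equals the number of strictly positive eigenvalues, with exponential backward convergence. Parametrizing $\cW^u$ by its tangent space at $p_0$ and quotienting by time-translation yields a $(\nu+q-1)$-parameter family of ancient solutions to the projected Ricci flow collapsing to $0\oplus\bar{P}_{\gn}$.

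It then remains to upgrade these orbits of a priori only generalized submersion metrics to genuine ancient Ricci flows. For positivity I would use the structural fact, contained in \eqref{Ric(0P)1}--\eqref{Ric(0P)2}, that the $\gt$-component of $\Ric_{\mathsmaller M}$ vanishes to first order along the base locus $\{0\oplus P_{\gn}\}$ and is hence quadratic in the fiber metric. Consequently no slow base mode can leak into the fiber, and along an orbit the fiber part satisfies $\sigma_{\gt}(t) = e^{2\l t}B_{\gt} + \mathrm{o}\big(e^{2\l t}\big)$, with $B_{\gt}$ the vertical unstable datum. Restricting to the $\nu$-dimensional open set of orbits with $B_{\gt}\in\Sym_+(\gt,Q_{\gt})^{\Ad(\fH)}$ then keeps the collapsing fiber metric positive definite for $t\ll 0$, while the base part stays positive definite by proximity to $\bar{P}_{\gn}$; thus on a half-line $(-\infty,t_0]$ these orbits lie in $\eM^{\fG}_{\mathsmaller M}(\gk)$.

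Finally I would undo the projection. As $\Ric_{\mathsmaller M}$ is scale invariant, a projected orbit $\sigma(t)$ lifts, through a rescaling $P = \rho\,\sigma$ and a time change $\diff s = \rho\,\diff t$ — where $\rho$ solves $\tfrac{\diff\rho}{\diff t} = -2\langle\!\langle\Ric_{\mathsmaller M}(\sigma),\sigma\rangle\!\rangle^{\mathsmaller{(\!\bar{P}_{\gn}\!)}}\rho$ — to a solution $P(s)$ of the homogeneous Ricci flow (cf.\ \cite{S}), and positivity of $\sigma$ passes to $P=\rho\,\sigma$. The main obstacle is the ancientness of the reconstructed flow, namely that the Ricci-flow time $s$ sweeps out a half-line $(-\infty,s_0]$ as $t$ ranges over $(-\infty,t_0]$: here I would use that $\langle\!\langle\Ric_{\mathsmaller M}(\sigma),\sigma\rangle\!\rangle^{\mathsmaller{(\!\bar{P}_{\gn}\!)}}\to\l>0$ along the orbit, so that $\rho$ grows like $e^{-2\l t}$ as $t\to-\infty$ and the integral $\int_{-\infty}^{t_0}\rho\,\diff t$ giving the elapsed Ricci-flow time diverges, forcing $s\to-\infty$. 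Collapse is then immediate from Proposition \ref{prop:coll-0PS}: the associated normalized flow is the unit-volume rescaling $\det(\sigma)^{-1/m}\sigma$, and since $\det(\sigma)\to 0$ while $\scal_{\mathsmaller M}(\sigma)\to\scal_{\mathsmaller N}(\bar{P}_{\gn})$ stays finite by \eqref{limitscal}, its scalar curvature $\det(\sigma)^{1/m}\scal_{\mathsmaller M}(\sigma)$ tends to $0$ as $s\to-\infty$.
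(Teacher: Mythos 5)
Your proposal follows the same skeleton as the paper's proof: identify $0\oplus\bar{P}_{\gn}$ as a rest point of the projected flow on the sphere, compute the block lower-triangular linearization \eqref{linearizationR}, invoke an invariant-manifold theorem at a possibly non-hyperbolic rest point to obtain a $(\nu+q)$-dimensional unstable manifold, and then pass back to the Ricci flow. Your two closing arguments, though different in detail from the paper's, are sound: for ancientness the paper combines \eqref{limitscal} with Lafuente's theorem that a homogeneous Ricci flow with positive scalar curvature is ancient, whereas you integrate the conformal factor $\rho$ directly and observe that $\int_{-\infty}^{t_0}\rho\,\diff t$ diverges because $\langle\!\langle \Ric_{\mathsmaller M}(\sigma),\sigma\rangle\!\rangle^{\mathsmaller{(\!\bar{P}_{\gn}\!)}}\to\l>0$; for collapse the paper argues via bounded curvature and vanishing injectivity radius, whereas you verify the criterion of Proposition \ref{prop:coll-0PS} from $\det(\sigma)\to 0$ and \eqref{limitscal}. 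Both variants work.

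The genuine gap is in the positivity step, and it sits exactly where the paper does its real work. Since the lower-left block $*$ in \eqref{linearizationR} is in general nonzero, the unstable eigenvectors with eigenvalue $2\l$ are \emph{not} purely vertical: they have the form $(B_{\gt})_i\oplus(B_{\gn})_i$ with nontrivial horizontal parts. Hence the set of vertical asymptotic data $B_{\gt}$ actually realized by orbits of $\cW^u$ is the image of the unstable space under the projection onto $\Sym(\gt,Q_{\gt})^{\Ad(\fH)}$, and a priori this image could be a proper subspace missing the positive-definite cone entirely (for instance, a subspace of trace-free endomorphisms), in which case the set of orbits you restrict to would be empty and the parameter count would fail. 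You treat $B_{\gt}$ as freely prescribable (``the vertical unstable datum'') without justification; this is precisely the content of the paper's claim that the endomorphisms $(B_{\gt})_i$, $1\leq i\leq\nu$, are linearly independent, proved there by noting that the purely horizontal eigenvectors $0\oplus(C_{\gn})_j$ already exhaust the kernel of the vertical projection because $\eB_1\cup\eB_2$ is a basis of $T_{0\oplus\bar{P}_{\gn}}\S^{\mathsmaller{(\!\bar{P}_{\gn}\!)}}$; only then does the vertical projection of the unstable space equal all of $\Sym(\gt,Q_{\gt})^{\Ad(\fH)}$, so that every positive-definite datum occurs. Relatedly, your description of the good orbits as ``the $\nu$-dimensional open set'' is off: once nonemptiness is established, they form a nonempty \emph{open} subset of the $(\nu+q-1)$-dimensional orbit space (the $q$ horizontal unstable directions produce distinct orbits with the same vertical datum), which is what the statement requires; as literally written, your restriction would deliver only a $\nu$-parameter family, inconsistent with the claimed $\nu+q-1$ unless $q=1$.
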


\begin{proof}
Let us observe that the $\bar{P}_{\gn}$-projected Ricci tensor \eqref{def:R} is defined on an open neighborhood of $0 \oplus \bar{P}_{\gn}$ inside $\S^{\mathsmaller{(\!\bar{P}_{\gn}\!)}}$. Moreover, from \eqref{prod2} it holds that
\begin{equation} \label{tangSigma}
T_{0 \oplus \bar{P}_{\gn}}\S^{\mathsmaller{(\!\bar{P}_{\gn}\!)}} = \Sym(\gt,Q_{\gt})^{\Ad(\fH)} \oplus T_{\bar{P}_{\gn}}\eM^{\fG}_{{\mathsmaller N},1}
\end{equation}
and, by \eqref{diffR1} and \eqref{diffR2}, it follows that
\begin{equation} \label{linearizationR}
\cR^{\mathsmaller{(\!\bar{P}_{\gn}\!)}}_{\mathsmaller M}(0 \oplus \bar{P}_{\gn}) = 0 \,\, , \quad \diff \cR^{\mathsmaller{(\!\bar{P}_{\gn}\!)}}_{\mathsmaller M}\big|_{0 \oplus \bar{P}_{\gn}} = \left(\begin{array}{c|c}
-\l \Id_{\Sym(\gt,Q_{\gt})^{\Ad(\fH)}} & 0 \\ \hline
* & \diff{\Ric_{\mathsmaller N}^0}|_{\bar{P}_{\gn}}
\end{array}\right) \,\, .
\end{equation}

By \eqref{tangSigma}, \eqref{linearizationR} and the Center Manifold Theorem \cite[p.\ 116]{Per}, it follows that there exists a stable manifold $\widehat{\eW^{\mathsmaller{(\!\bar{P}_{\gn}\!)}}}$ for $\cR^{\mathsmaller{(\!\bar{P}_{\gn}\!)}}_{\mathsmaller M}$ at $0 \oplus \bar{P}_{\gn}$ of dimension $\dim\widehat{\eW^{\mathsmaller{(\!\bar{P}_{\gn}\!)}}}=\nu+q$, where $q$ is the coindex of $\bar{P}_{\gn}$ (see Definition \ref{def:coindex}). We remark that $\widehat{\eW^{\mathsmaller{(\!\bar{P}_{\gn}\!)}}}$ is a submanifold of $\widehat{\eM^{\fG}_{\mathsmaller M}(\gk)}$ and that, being eventually interested in the positive-definite solutions to the Ricci flow, we need to compute the dimension of the manifold $\eW^{\mathsmaller{(\!\bar{P}_{\gn}\!)}} \= \widehat{\eW^{\mathsmaller{(\!\bar{P}_{\gn}\!)}}} \cap \eM^{\fG}_{\mathsmaller M}(\gk)$. For this purpose let us observe that, restricting to the sphere $\S^{\mathsmaller{(\!\bar{P}_{\gn}\!)}}$, the eigenvectors of $\diff \cR^{\mathsmaller{(\!\bar{P}_{\gn}\!)}}_{\mathsmaller M}\big|_{0 \oplus \bar{P}_{\gn}}$ consist of two families of endomorphisms inside $T_{0 \oplus \bar{P}_{\gn}}\S^{\mathsmaller{(\!\bar{P}_{\gn}\!)}}$, namely: \begin{itemize}
\item[$\bcdot$] those coming from the upper left block of \eqref{linearizationR}, spanned by a basis of the form
$$
\eB_1= \big((B_{\gt})_i \oplus (B_{\gn})_i\big) \,\, , \quad 1 \leq i \leq \nu \,\, ;
$$
\item[$\bcdot$] those coming from the lower right block of \eqref{linearizationR}, spanned by a basis of the form
$$
\eB_2= \big(0 \oplus (C_{\gn})_j\big) \,\, , \quad 1 \leq j \leq p-1 \,\, ,
$$
where $p \= \dim \eM^{\fG}_{\mathsmaller N}$.
\end{itemize}
We claim that the endomorphisms $(B_{\gt})_i$ must be linearly independent inside $\Sym(\gt,Q_{\gt})^{\Ad(\fH)}$. If not, then there is a non-trivial linear combination
$$
\sum_i \mu_i\big((B_{\gt})_i \oplus (B_{\gn})_i\big) = 0 \oplus B_{\gn}^{\star} \quad \text{ for some non-zero } B_{\gn}^{\star} \in T_{\bar{P}_{\gn}}\eM^{\fG}_{{\mathsmaller N},1} \,\, .
$$
Since $(C_{\gn})_j$ forms a basis for $T_{\bar{P}_{\gn}}\eM^{\fG}_{{\mathsmaller N},1}$, there is another linear combination
$$
\sum_j \tilde{\mu}_j (C_{\gn})_j = B_{\gn}^{\star} \,\, ,
$$
but this contradicts the fact that $\eB_1 \cup \eB_2$ is a basis for $T_{0 \oplus \bar{P}_{\gn}}\S^{\mathsmaller{(\!\bar{P}_{\gn}\!)}}$. This shows in particular that $\eW^{\mathsmaller{(\!\bar{P}_{\gn}\!)}}$ has dimension $\dim\eW^{\mathsmaller{(\!\bar{P}_{\gn}\!)}}=\dim\widehat{\eW^{\mathsmaller{(\!\bar{P}_{\gn}\!)}}}=\nu+q$.

Let now $P(t) = P_{\gt}(t) \oplus P_{\gn}(t)$ be an ancient solution to the $\bar{P}_{\gn}$-projected Ricci flow lying on $\eW^{\mathsmaller{(\!\bar{P}_{\gn}\!)}}$. It remains to prove that the corresponding solution to the Ricci flow is still ancient. Notice that by \eqref{limitscal} it holds that
$$
\scal_{\mathsmaller M}(P(t)) \to \l \dim(N) \quad \text{ as } t \to -\infty \,\, . 
$$
Thus for large times $\scal_{\mathsmaller M}(P(t))>0$, and hence the same is true for the corresponding solution to the Ricci flow. However, a solution to the Ricci flow whose scalar curvature stays positive is necessarily ancient by \cite[Thm.\ 1.1]{La}. Furthermore, as in the proof of Proposition 4.2 in \cite{WZ}, $P(t)$ has bounded curvature and is hence collapsed as the injectivity radius tends to zero.
\end{proof}

Finally, Theorem \ref{main-A} is a direct consequence of Theorem \ref{thm:existence} and Proposition \ref{propconvGH}.

\medskip
\section{Proof of Corollary \ref{main-B}} \label{sect:examples}
\setcounter{equation} 0

In this section, we produce explicit examples of collapsed homogeneous ancient solutions. As a byproduct, we prove Corollary \ref{main-B}. For a detailed study of Einstein equations on generalized flag manifolds, we refer {\it e.g.\ }to \cite{Arv}. In the following examples, the group $\fG$ will always be semisimple and so we choose its negative Cartan-Killing form as background metric.

\subsection{A K\"ahler-Einstein metric on $\fSU(3)/\fT^2$} \hfill \par

Let $\fG=\fSU(3)$, $\fT^2 = \{\operatorname{diag}(e^{it_1},e^{it_2},e^{-i(t_1+t_2)})\}$ its maximal torus and consider the real root spaces decomposition
$$
\su(3) = \gt^2 + \gn_1 +\gn_2 +\gn_3 \,\, .
$$
Then, any $\fG$-invariant Riemannian metric $P_{\gn}$ on the flag manifold $N = \fSU(3)/\fT^2$ takes the form
$$
P_{\gn} = \l_1 \Id_{\gn_1} \oplus \l_2 \Id_{\gn_2} \oplus \l_3 \Id_{\gn_3} 
$$
and its normalized scalar curvature is given by {(see {\it e.g.\ }\cite[Prop.\ 4]{Arv})}
$$
\wt{\scal}_{\mathsmaller N}(P_{\gn}) = (\l_1\l_2\l_3)^{\frac13} \Big( \tfrac1{\l_1} +\tfrac1{\l_2} +\tfrac1{\l_3} -\tfrac16 \big( \tfrac{\l_1}{\l_2\l_3} +\tfrac{\l_2}{\l_1\l_3} +\tfrac{\l_3}{\l_1\l_2} \big) \Big) \,\, .
$$
Take the unit volume K\"ahler-Einstein metric $P_{\gn}^{\rm \mathsmaller{KE}}$ corresponding to the values
$$
(\l_1,\l_2,\l_3) = \tfrac13(\tfrac{27}{2})^{\frac13}(1,1,2)
$$
and one can compute that
$$
\operatorname{spectrum}\!\Big(\Hess\!\big(\wt{\scal}_{\mathsmaller N}\big)\big|_{P_{\gn}^{\rm \mathsmaller{KE}}}\Big) = \big\{{-}\tfrac13, 0, \tfrac43\big\} \,\, .
$$
Here, the zero eigenvalue corresponds to scaling the metric by a constant and so $P_{\gn}^{\rm \mathsmaller{KE}}$ has coindex $q=1$.
Now consider the homogeneous fibration
\begin{equation} \label{SU(3)fibration1}
\fT^2 \to \fSU(3) \to \fSU(3)/\fT^2 \,\, .
\end{equation}
By Theorem \ref{thm:existence}, there is a $3$-parameter family of ancient solutions to the Ricci flow on $\fSU(3)$ which, under the rescaling introduced in Section \ref{sect:projRF}, collapse the fibers of \eqref{SU(3)fibration1} and converge to $0 \oplus P_{\gn}^{\rm \mathsmaller{KE}}$ as $t\to-\infty$. Similarly if $\fS^1_{p,q}=\{\operatorname(e^{i pt},e^{i qt},e^{-i(p+q)t})\}$, we get the homogeneous fibration
\begin{equation} \label{SU(3)fibration2}
\fS^1=\fT^2/\fS^1_{p,q} \to \fSU(3)/\fS^1_{p,q} \to \fSU(3)/\fT^2 \,\, ,
\end{equation}
where $\fSU(3)/\fS^1_{p,q}$ is an Aloff-Wallach space. By Theorem \ref{thm:existence}, there is a 1-parameter family of ancient solutions on $\fSU(3)/\fS^1_{p,q}$ converging to $0 \oplus P_{\gn}^{\rm \mathsmaller{KE}}$ as above.

\begin{rem}
In \cite{LW}, Lu and Wang produce a two-parameter family of ancient solutions on $\fSU(3)$ and a single ancient solution on $\fSU(3)/\fS^1_{p,q}$ both collapsing to $P_{\gn}^{\rm \mathsmaller{KE}}$ as $t\to-\infty$. Our families are slightly larger, which can be explained by the fact that the metric restricted to the base is allowed to vary.
\end{rem}

\subsection{A K\"ahler-Einstein metric on $\fSU(4)/\fT^3$} \hfill \par

Let $\fG=\fSU(4)$, $\fT^3 = \{\operatorname{diag}(e^{it_1},e^{it_2},e^{it_3},e^{-i(t_1+t_2+t_3)})\}$ its maximal torus and consider the real root spaces decomposition
$$
\su(3) = \gt^2 +\gn_1 +\gn_2 +\gn_3 +\gn_4 +\gn_5 +\gn_6 \,\, .
$$
Then, any $\fG$-invariant Riemannian metric $P_{\gn}$ on the flag manifold $N = \fSU(4)/\fT^3$ takes the form
$$
P_{\gn} = \l_1 \Id_{\gn_1} \oplus {\dots} \oplus \l_6 \Id_{\gn_3} 
$$
and its normalized scalar curvature is given by {(see {\it e.g.\ }\cite[Prop.\ 4]{Arv})}
\begin{multline*}
\wt{\scal}_{\mathsmaller N}(P_{\gn}) = (\l_1\l_2\l_3\l_4\l_5\l_6)^{\frac16}\Big(
\tfrac1{\l_1} +\tfrac1{\l_2} +\tfrac1{\l_3} +\tfrac1{\l_4} +\tfrac1{\l_5} +\tfrac1{\l_6}
-\tfrac18\big(\tfrac{\l_1}{\l_2\l_4} +\tfrac{\l_1}{\l_3\l_5} +\tfrac{\l_2}{\l_1\l_4} +\tfrac{\l_2}{\l_3\l_6} \\
+\tfrac{\l_3}{\l_1\l_5} +\tfrac{\l_3}{\l_2\l_6} +\tfrac{\l_4}{\l_1\l_2} +\tfrac{\l_4}{\l_5\l_6} +\tfrac{\l_5}{\l_1\l_3} +\tfrac{\l_5}{\l_4\l_6} +\tfrac{\l_6}{\l_2\l_3} +\tfrac{\l_6}{\l_4\l_5}\big)\Big) \,\, .
\end{multline*}
Take the unit volume K\"ahler-Einstein metric $P_{\gn}^{\rm \mathsmaller{KE}}$ corresponding to the values
$$
(\l_1,\l_2,\l_3,\l_4,\l_5,\l_6) = \tfrac14(\tfrac{1024}{3})^{\frac16}(3,2,1,1,2,1).
$$
One can compute that the matrix $\Hess\!\big(\wt{\scal}_{\mathsmaller N}\big)\big|_{P_{\gn}^{\rm \mathsmaller{KE}}}$ has two distinct positive eigenvalues, three negative eigenvalues and one zero eigenvalue, corresponding to the scaling direction. Therefore, $P_{\gn}^{\rm \mathsmaller{KE}}$ has coindex $q=2$. By Theorem \ref{thm:existence}, on $\fSU(4)$ there is a $7$-parameter family of ancient solutions to the Ricci flow collapsing, under rescaling, to $0 \oplus P_{\gn}^{\rm \mathsmaller{KE}}$ as $t\to-\infty$. Similarly on $\fSU(4)/\fS^1$ there is a $4$-parameter family of ancient solutions, and on $\fSU(4)/\fT^2$ there is a $2$-parameter family of ancient solutions.

Notice that, as in the previous example, the construction of Lu and Wang again provides ancient solutions on $\fSU(4)$ but their family is two dimensions smaller, due to the fact that in their construction the metric on the base remains fixed.

\subsection{A K\"ahler-Einstein metric on $\fG_2/\fT^2$} \hfill \par

Let $\fG=\fG_2$, $\fT^2$ a maximal torus inside $\fG_2$ and consider the real root spaces decomposition
$$
\su(3) = \gt^2 +\gn_1 +\gn_2 +\gn_3 +\gn_4 +\gn_5 +\gn_6 \,\, .
$$
Then, any $\fG$-invariant Riemannian metric $P_{\gn}$ on the flag manifold $N = \fG_2/\fT^2$ takes the form
$$
P_{\gn} = \l_1 \Id_{\gn_1} \oplus {\dots} \oplus \l_6 \Id_{\gn_6} 
$$
and its normalized scalar curvature is given by
\begin{multline*}
\wt{\scal}_{\mathsmaller N}(P_{\gn}) = (\l_1\l_2\l_3\l_4\l_5\l_6)^{\frac16} \Big(
\tfrac1{\l_1} +\tfrac1{\l_2} +\tfrac1{\l_3} +\tfrac1{\l_4} +\tfrac1{\l_5} +\tfrac1{\l_6} -\tfrac16\big(\tfrac{\l_1}{\l_3\l_4} +\tfrac{\l_3}{\l_1\l_4} +\tfrac{\l_4}{\l_1\l_3}\big) \\
-\tfrac18\big(\tfrac{\l_1}{\l_2\l_3} +\tfrac{\l_1}{\l_4\l_5} +\tfrac{\l_2}{\l_1\l_3} +\tfrac{\l_2}{\l_5\l_6} +\tfrac{\l_3}{\l_1\l_2} +\tfrac{\l_3}{\l_4\l_6} +\tfrac{\l_4}{\l_1\l_5} +\tfrac{\l_4}{\l_3\l_6} +\tfrac{\l_5}{\l_1\l_4} +\tfrac{\l_5}{\l_2\l_6} +\tfrac{\l_6}{\l_2\l_5} +\tfrac{\l_6}{\l_3\l_4} \big) \Big).
\end{multline*}
For more information about homogeneous Einstein metrics on $N = \fG_2/\fT^2$, see \cite{ACS}. Take the unit volume K\"ahler-Einstein metric $P_{\gn}^{\rm \mathsmaller{KE}}$ corresponding to the values
$$
(\l_1,\l_2,\l_3,\l_4,\l_5,\l_6) = \tfrac1{12}(\tfrac{4608}{5})^{\frac16}(1,3,4,5,6,9)
$$
and one can compute that the matrix $\Hess\!\big(\wt{\scal}_{\mathsmaller N}\big)\big|_{P_{\gn}^{\rm \mathsmaller{KE}}}$ has one positive eigenvalue, four negative eigenvalues and one zero eigenvalue, corresponding to the scaling direction. Therefore, $P_{\gn}^{\rm \mathsmaller{KE}}$ has coindex $q=1$. By Theorem \ref{thm:existence}, on $\fG_2$ there is a $3$-parameter family of ancient solutions to the Ricci flow collapsing, under rescaling, to $P_{\gn}^{\rm \mathsmaller{KE}}$ as $t\to-\infty$. Similarly on $\fG_2/\fS^1$ there is a $1$-parameter family of ancient solutions.

\subsection{The normal Einstein metric on $\fSU(n)/\fT^{n-1}$} \hfill \par

Let $\fG=\fSU(n)$, with $n \geq 3$, and $\fT^{n-1} \subset \fSU(n)$ the diagonally embedded maximal torus. Then for any $1\leq k\leq n-1$ and any subtorus $\fT^{n-1-k}\subset \fT^{n-1}$ we have a homogeneous fibration
$$
\fT^k \to \fSU(n)/\fT^{n-1-k} \to  \fSU(n)/\fT^{n-1} \,\, ,
$$
where $\fT^k$ is a complement of $\fT^{n-1-k}$ in $\fT^{n-1}$. By \cite{Lau}, the normal metric on $\fSU(n)/\fT^{n-1}$ induced by the biinvariant metric on $\fSU(n)$ is Einstein with coindex $q= n-1$. Hence by Theorem \ref{thm:existence}, there exists a $\big(\frac{k(k+1)}2+n-2\big)$-parameter family of ancient solutions on $\fSU(n)/\fT^{n-1-k}$ which collapse, under rescaling, to the normal metric on the base as $t \to -\infty$.

\subsection{The normal Einstein metric on $\fSO(4)/\fT^2= S^2\times S^2$}  \hfill \par

Let $\fG=\fSO(4)$, $\fT^2\subset\fSO(4)$ be a maximal torus and consider the $\Ad(\fT^2)$-irreducible decomposition
$$
\so(4)=\gt^2+\gn_1+\gn_2.
$$
Then, any $\fG$-invariant Riemannian metric $P_\gn$ on $N = \fSO(4)/\fT^2$ takes the form 
$$
P_{\gn} = \l_1 \Id_{\gn_1} \oplus \l_2 \Id_{\gn_2} 
$$
and its normalized scalar curvature is given by 
$$
\wt{\scal}_{\mathsmaller N}(P_{\gn}) = (\l_1\l_2)^{\frac12}\Big(\tfrac1{\l_1}+\tfrac1{\l_2}\Big).
$$
The normal metric $P_{\gn}^{\rm \mathsmaller{E}}$ induced by the biinvariant metric on $\fSO(4)$ is Einstein and given by
$$
(\l_1,\l_2)=(1,1) \,\, .
$$
One can compute that the matrix $\Hess\!\big(\wt{\scal}_{\mathsmaller N}\big)\big|_{P_{\gn}^{\rm \mathsmaller{E}}}$ has one positive eigenvalue and one zero eigenvalue, corresponding to the scaling direction. Therefore, $P_{\gn}^{\rm \mathsmaller{E}}$ has coindex $q=1$. Hence by Theorem \ref{thm:existence} on $\fSO(4)$ there is a $3$-parameter family of ancient solutions which collapse, under rescaling, to $P_{\gn}^{\rm \mathsmaller{E}}$ as $t\to-\infty$. Similarly, if $\fS^1_{p,q}\subset\fT^2$ is a diagonally embedded circle with rational slope $\tfrac{p}{q}$, then on $\fSO(4)/\fS^1_{p,q}\simeq S^3\times S^2$ there is a $1$-parameter family of ancient solutions which collapse, under rescaling, to $P_{\gn}^{\rm \mathsmaller{E}}$ as $t\to-\infty$.

\end{document}